\theoremstyle{plain}
\newtheorem{theorem}{Theorem}
\newtheorem{lemma}{Lemma}
\newtheorem{proposition}{Proposition}
\newtheorem{remark}{Remark}
\newtheorem{example}{Example}
\def\dt{\partial_t}
\def\dx{\partial_x}
\def\dy{\partial_y}
\def\<{\langle}
\def\>{\rangle}
\def \n {\nabla}
\def \b {\beta}
\def \a {\alpha}
\def \Mt {\widetilde{M}}
\def\<{ \left < }
\def\>{ \right > }
\def \f {\mathbf{f}}
\def\R{\mathbb{R}}
\def\H{\mathbb{H}}
\def\S{\mathbb{S}}
\begin{document}

\title[]
{Canonical Coordinates and Principal Directions for Surfaces in $\H^2\times\R$}
\thanks{{\bf This paper will appear in Taiwanese Mathematical Journal, 2010.}}

\author[F. Dillen]{Franki Dillen}

\address[F. Dillen {\rm and} A.~I. Nistor]{Katholieke Universiteit Leuven\\ Departement
Wiskunde\\ Celestij\-nenlaan 200 B\\ B-3001 Leuven\\ Belgium}
\email[F. Dillen]{franki.dillen (at) wis.kuleuven.be}
\email[A.~I. Nistor]{ana.irina.nistor (at) gmail.com}

\author[M.~I. Munteanu]{Marian Ioan Munteanu}

\address[M.~I. Munteanu]{University 'Al. I. Cuza' Ia\c si\\ Faculty of Mathematics\\
Bd. Carol I, no.11\\ 700506 Ia\c si\\ Romania\\
\url{http://www.math.uaic.ro/~munteanu}}
\email[M.~I. Munteanu]{marian.ioan.munteanu (at) gmail.com}

\author[A.~I. Nistor]{Ana-Irina Nistor}

\thanks{This research was supported by Research Grant G.0432.07 of the Research Foundation-Flanders (FWO).
The second author was supported by Grant PN-II ID 398/2007-2010 (Romania).
}

\begin{abstract}
In this paper we characterize and classify surfaces in
${\mathbb{H}}^2\times{\mathbb{R}}$ which have a canonical principal
direction. Here ${\mathbb{H}}^2$ denotes the hyperbolic plane. We
study some geometric properties such as minimality and flatness. Several
examples are given to complete the study.
\end{abstract}

\keywords{hyperbolic plane, minimal surfaces, principal direction}

\subjclass[2000]{53B25}
\date{\today}

\maketitle

\section{Introduction}

The geometry of surfaces in spaces of dimension 3, especially of the form ${\mathbb{M}}^2\times{\mathbb{R}}$,
has enriched in last years. The most interesting situations occur when ${\mathbb{M}}^2$
has constant Gaussian curvature, since in these
cases a lot of classification results are obtained.
One of the first problems, minimality, in these ambient spaces
was studied by H. Rosenberg and W.H. Meeks III in \cite{dmn:MR04,dmn:MR05,dmn:Ros02}.
Inspired by these papers, a generalization for arbitrary dimension, namely for
$\S^n\times\R$ and $\H^n\times\R$, is given in \cite{dmn:Dan,dmn:DFV, dmn:MT, dmn:T09}.
In particular, in \cite{dmn:T09} the author proves independently a higher dimensional version
of Theorem 5 in the present paper.
Some other recent results involving minimality and curvature properties for
surfaces immersed in ambient spaces as $\S^2\times\R$ and $\H^2\times\R$ can be found in
\cite{dmn:AEG07,dmn:FM07,dmn:Hau06,dmn:MMP06,dmn:MO07,dmn:SaE08,dmn:ST05,dmn:ST09}.

Another problem, studied in several very recent papers, consists of
characterization and classification of constant angle surfaces in different 3-dimensional spaces belonging to the
Thurston list, namely in ${\mathbb{E}}^3$,
${\mathbb{S}}^2\times{\mathbb{R}}$,
${\mathbb{H}}^2\times{\mathbb{R}}$, or the Heisenberg group ${\rm Nil}_3$.
A constant angle surface is an orientable surface whose unit normal makes a constant
angle, denoted by $\theta$, with a fixed direction. See for example
\cite{dmn:DFVV07,dmn:DM07,dmn:DM09,dmn:FMV08,dmn:LM10, dmn:MN09}. When the
ambient is of the form ${\mathbb{M}}^2\times{\mathbb{R}}$, a favored
direction is ${\mathbb{R}}$. It is known that for a constant angle
surface in ${\mathbb{E}}^3$, ${\mathbb{S}}^2\times{\mathbb{R}}$ or
in ${\mathbb{H}}^2\times{\mathbb{R}}$, the projection of
$\frac{\partial}{\partial t}$ (where $t$ is the global parameter on
${\mathbb{R}}$) onto the tangent plane of the immersed surface,
denoted by $T$, is a principal direction with the corresponding
principal curvature identically zero. The main topic of the present work is to investigate surfaces in ${\mathbb{H}}^2\times{\mathbb{R}}$
for which $T$ is a principal direction. The study of these surfaces was motivated
by the results obtained in \cite{dmn:DFVV} for the ambient space $\S^2\times\R$.

The structure of this paper is the following.
After we introduce the basic notions in {\em Preliminaries}, we show in the sequel
a way of choosing appropriate coordinates useful in the study of flatness and minimality.
We define also canonical coordinates on a surface for which $T$ is a principal direction.

The last section consists of the main results concerning surfaces with a canonical
principal direction. We give a characterization theorem involving the
normally flatness of these surfaces regarded as codimension $2$ surfaces, immersed in
$\R_1^4=\R_1^3\times\R$. Moreover, we formulate the following classification Theorem~\ref{dmn:thm4} (See also Theorem~\ref{dmn:thm5}):

{\em A surface $M$ isometrically immersed in $\H^2\times\R$ has $T$
as principal direction if and only if the immersion $F$ is given,
up to isometries of the ambient space, by
$F:M\rightarrow\H^2\times\R$,
$$
F(x,y)=\left(A(y)\sinh\phi(x)+B(y)\cosh\phi(x),\ \chi(x)\right)
$$
where $(\phi,\ \chi)$ is a regular curve in $\R^2$, $A$ is a curve in $\S_1^2\subset\R_1^3$,
$B$ is a curve in $\H^2\subset\R_1^3$ orthogonal to $A$ such that
the two speeds $A'$ and respectively $B'$ are parallel. Here
$\S_1^2$ denotes the de Sitter space.}

As a consequence, we retrieve the classification of all constant angle surfaces in $\H^2\times\R$,
obtained in \cite{dmn:DM09}. Finally, we give some other theorems under extra assumptions of minimality or flatness.
We construct a lot of suggestive examples to illustrate our study.

\section{Preliminaries}

Let us fix the notations used in this paper.
By $\Mt= \H^2\times\R$ we denote the ambient space given as the Riemannian
product of the hyperbolic space endowed with the metric $g_H$,
namely $(\H^2(-1), g_H)$, and the one dimensional Euclidean space
endowed with the usual metric. The metric on the ambient space is
given by $\widetilde{g}=g_H+dt^2$, where $t$ is the global
coordinate on $\R$. Then $\dt:=\frac{\partial}{\dt}$ denotes an unit
vector field in the tangent bundle $T(\Mt)$ that is tangent to the
$\R$-direction.
We denote by $\widetilde{R}$ either the curvature tensor
$\widetilde R(X,Y)=[\widetilde\nabla_X,\widetilde\nabla_Y]-\widetilde\nabla_{[X,Y]}$,
or the Riemann-Christoffel tensor on $\Mt$ defined by $\widetilde R(W,Z,X,Y)=\widetilde g(W,\widetilde R(X,Y)Z)$.
One has
\begin{equation*}
\label{dmn:RC} \widetilde{R}(X,Y,Z,W)=-g_H(X_H, W_H)g_H(Y_H,
Z_H)+g_H(X_H, Z_H)g_H(Y_H, W_H)
\end{equation*}
 for any $X,Y,Z,W\in T(\Mt)$ and $X_H$ denotes the projection of $X$ to the tangent space of $\H^2$.

In the sequel we study some important properties of submanifolds $(M,g)\hookrightarrow (\Mt, \widetilde{g})$
isometrically immersed in $\Mt$. Roughly speaking, the metric $g$ on $M$ represents the
restriction of $\widetilde{g}$ to $M$.

For an isometric immersion $F:M\rightarrow \Mt$ we recall the classical Gauss and
Weingarten formulas:

{\bf(G)}\qquad\qquad $\widetilde{\nabla}_X Y=\nabla_X Y + h(X,Y)$

{\bf(W)}\qquad\qquad $\widetilde{\nabla}_X N=-A_N X + \nabla_X^\perp N$

where $X,Y$ are tangent to $M$, $\widetilde{\nabla}$ and $\nabla$
denote the Levi-Civita connections on $\Mt$ respectively on $M$, and $N$
denotes any vector field normal to $M$. Moreover $h$ is a symmetric (1, 2)-tensor
field called the second fundamental form of the submanifold $M$,
$A_N$ is a symmetric (1, 1)-tensor field called the shape operator
associated to $N$ and $\nabla^\perp$ denotes the connection in the normal bundle.

In particular, let us consider an oriented surface $M$ in $\Mt$. If $\xi$ is
the unit normal to $M$ associated with the shape operator $A$, the
following property holds:
$$
\widetilde{g}(h(X,Y),\xi)=g(X,AY)
$$
for any vector fields $X,Y$ tangent to $M$. Taking into account these
considerations, since $\dt$ is unitary, it can be decomposed as
\begin{equation}
\label{dmn:t} \dt=T+\cos \theta \xi
\end{equation}
where $T$ is the projection of $\dt$ on $T(M)$ and $\theta$ is the
\emph{angle function} depending of the point of the surface and
supposed to take values in the interval $[0,\pi]$.

Denoting by $R$ the curvature tensor on $M$, after straightforward computations we are able to write the
fundamental equations of Gauss and Codazzi

{\bf (E.G.)}
$R(X,Y)=AX\wedge AY-X\wedge Y+T^\flat\otimes(X\wedge Y)(T)-(X\wedge Y)(T)^\flat\otimes T$

{\bf (E.C.)}
$\left(\nabla_X A\right)Y-\left(\nabla_Y A\right)X =\cos\theta~(g(X,T)Y-g(Y,T)X)$

where $(X\wedge Y)(Z):=g(X,Z)Y-g(Y,Z)X$, $\flat$ denotes the musical isomorphism flat and
$(\omega\otimes X)(Y)=\omega(Y)X$, for $\omega\in\Lambda^1(M)$ and
for all $X,Y,Z \in T(M)$.

Computing the Gaussian curvature $K$, from the equation of
Gauss it follows
\begin{equation}
\label{dmn:K} K=\det A -\cos^2\theta.
\end{equation}
Now, taking into account the decomposition of any vector field $X\in T(M)$ as
$X=X_H+g(X,T)\dt$, the expression of $\dt$ from \eqref{dmn:t}
and the formulas {\bf (G)} and {\bf (W)}, the following proposition
holds as in \cite{dmn:DM09}.

{\bf Proposition A.} 
\it
Let $X$ be an arbitrary tangent vector to $M$. Then we have
\begin{equation}
\label{dmn:p1_1}
\nabla_X T =\cos \theta AX \\
\end{equation}
\begin{equation}
\label{dmn:p1_2} \hspace{10mm}X(\cos\theta)=-g(AX,T).
\end{equation}
\rm

The set of equations {\bf (E.G.)}, {\bf (E.C.)},
\eqref{dmn:p1_1} and \eqref{dmn:p1_2} are called the \emph{compatibility equations}.
The following result was given in \cite{dmn:Dan}:

{\bf Theorem B.} \it
Let $M$ be a simply connected Riemannian surface endowed with the metric $g$ and
its corresponding Levi-Civita connection $\nabla$. Let $A$ be a field of
symmetric operators $A_p:T_p(M)\rightarrow T_p(M)$ and $T$ a vector field on $M$
with $\|T\|^2=\sin^2\theta$, where $\theta$ is a smooth function defined on $M$.
Assume that $(g,A,T,\theta)$ satisfies the compatibility equations for $\H^2\times\R$.
Then there exists an isometric immersion $F:M\rightarrow \H^2\times\R$ such that
the shape operator with respect to the unit normal $\xi$ is given by $A$ and
$\partial_t=F_*T+\cos\theta \xi$. Moreover, the immersion is unique up to isometries of $\H^2\times\R$
preserving the orientation of both $\H^2$ and $\R$. \rm

\section{Surfaces in $\H^2\times\R$}

In this section we suppose that the angle function $\theta$ is different from $0$ and $\frac{\pi}{2}$.

\begin{proposition} 
\label{dmn:prop1}
If $\theta$ is never 0 or $\frac\pi2$, then we can choose local coordinates $(x,y)$  on the
surface $M$ isometrically immersed in $\Mt$ with $\partial_x$ in the direction of $T$ such that the metric on
$M$ has the following form
\begin{equation}
\label{dmn:p2_1} g= \frac{1}{\sin^2\theta}dx^2+\b ^2(x,y)dy^2.
\end{equation}
In the basis $\{\partial_x,\ \partial_y\}$ the shape operator $A$
can be expressed as
\begin{equation}
\label{dmn:p2_2}
A = \left( \begin{array}{ccc}
\theta_x\sin\theta & \theta_y\sin\theta \\
\frac{\theta_y}{\sin\theta\b^2}&\frac{\sin^2\theta\b_x}{\cos\theta\b}\end{array}
\right)
\end{equation}
and the functions $\theta$ and $\beta$ are related by the PDE
\begin{equation}
\label{dmn:p2_3} \frac{\sin^2\theta}{\cos\theta}\frac{\b_{xx}}{\b} +
\frac{\sin\theta\theta_x}{\cos^2\theta}\frac{\b_{x}}{\b}
+\frac{\theta_{y}}{\sin\theta}\frac{\b_y}{\b^3}+
\left(2\frac{\cos\theta\theta_y^2}{\sin^2\theta}-\frac{\theta_{yy}}{\sin\theta}\right)\frac{1}{\b^2}-\cos\theta=0.
\end{equation}
\end{proposition}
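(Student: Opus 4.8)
The plan is to start from the hypothesis that $\theta\neq 0,\tfrac\pi2$ everywhere and build the coordinates in two stages. First, since $\partial_t = T + \cos\theta\,\xi$ and $\|\partial_t\|=1$, we get $\|T\|^2 = \sin^2\theta \neq 0$, so $T$ is a nowhere-vanishing tangent vector field on $M$; its orthogonal complement inside $T(M)$ is a well-defined line field. I would choose $x$ to be a parameter along the integral curves of $T/\|T\|^2$ (or simply along $T$-integral curves, adjusting the normalization) and $y$ to be constant along those curves, i.e.\ take $y$ to label the orthogonal trajectories. This produces local coordinates $(x,y)$ with $\partial_x$ parallel to $T$ and $g(\partial_x,\partial_y)=0$; writing $g(\partial_x,\partial_x)=\lambda^2$ and $g(\partial_y,\partial_y)=\beta^2$, I then want to show $\lambda = 1/\sin\theta$ (up to reparametrizing $x$). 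This should follow from $\|T\|^2=\sin^2\theta$ together with the fact that $\partial_x = cT$ for some function $c$: choosing the parametrization so that $\partial_x = T/\sin^2\theta$ gives $g(\partial_x,\partial_x) = \|T\|^2/\sin^4\theta = 1/\sin^2\theta$, which is \eqref{dmn:p2_1}. One should double-check that such a normalization is consistent, i.e.\ that the resulting $x$ is a genuine coordinate — this is where Proposition~A is useful, since \eqref{dmn:p1_1} controls $\nabla_T T$ and hence the geometry of the $T$-flow.

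Second, I would compute the shape operator in this frame. Write $A\partial_x = a\,\partial_x + b\,\partial_y$ and $A\partial_y = c\,\partial_x + d\,\partial_y$, symmetric with respect to $g$, so $\lambda^2 b = \beta^2 c$, i.e.\ $c = \lambda^2 b/\beta^2 = b/(\sin^2\theta\,\beta^2)$. To pin down the entries I would use the two identities in Proposition~A. From \eqref{dmn:p1_2}, $X(\cos\theta) = -g(AX,T)$; taking $X=\partial_x$ and $X=\partial_y$ and using $T = \sin^2\theta\,\partial_x$ gives two scalar equations relating $\theta_x,\theta_y$ to $a$ and $b$. From \eqref{dmn:p1_1}, $\nabla_X T = \cos\theta\,AX$; expanding $T=\sin^2\theta\,\partial_x$ via the Levi-Civita connection of the metric \eqref{dmn:p2_1} (whose Christoffel symbols are explicit in terms of $\theta$ and $\beta$) and matching the $\partial_x$- and $\partial_y$-components for $X=\partial_x$ and $X=\partial_y$ yields the remaining relations, in particular the expression $d = \sin^2\theta\,\beta_x/(\cos\theta\,\beta)$ and confirms $a = \theta_x\sin\theta$, $b=\theta_y\sin\theta$, $c = \theta_y/(\sin\theta\,\beta^2)$, giving \eqref{dmn:p2_2}. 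These are the routine computations I would not grind through here, but they are purely mechanical once the Christoffel symbols of \eqref{dmn:p2_1} are in hand.

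Finally, for the PDE \eqref{dmn:p2_3}: this is the integrability condition that the given $A$, $T$, $\theta$, $\beta$ actually fit together, and it comes from the Codazzi equation \textbf{(E.C.)}. I would evaluate $(\nabla_{\partial_x}A)\partial_y - (\nabla_{\partial_y}A)\partial_x = \cos\theta\,(g(\partial_x,T)\partial_y - g(\partial_y,T)\partial_x)$; since $g(\partial_y,T)=0$ and $g(\partial_x,T) = \sin^2\theta\cdot\lambda^2\cdot\ldots$ wait — more precisely $g(\partial_x,T)=g(\partial_x,\sin^2\theta\,\partial_x)=\sin^2\theta/\sin^2\theta=1$, so the right-hand side is $\cos\theta\,\partial_y$. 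Expanding the left-hand side using \eqref{dmn:p2_2} and the Christoffel symbols produces, after simplification, a relation among $\theta$, $\beta$ and their derivatives. I expect that the $\partial_x$-component of this Codazzi identity is automatically satisfied (or reduces to one of the Proposition~A relations), while the $\partial_y$-component gives a second-order equation in $\beta$ which, after clearing denominators and using the already-established first-order relations to eliminate $\theta_{xy}$-type terms, becomes exactly \eqref{dmn:p2_3}.

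The main obstacle I anticipate is purely organizational: keeping the bookkeeping of Christoffel symbols, the non-unit norm of $\partial_x$, and the factor $\sin^2\theta$ relating $\partial_x$ and $T$ all consistent, so that the Codazzi computation collapses to the stated PDE rather than to something that looks different. A secondary subtlety is justifying that the coordinate construction is valid — that the flow of $T$ and its orthogonal foliation really do give honest local coordinates — but on a surface with a nowhere-zero vector field this is standard (flow box / straightening theorem), and the normalization $\partial_x = T/\sin^2\theta$ is legitimate because $\sin\theta \neq 0$. I would also remark that the formula for $d$ requires $\cos\theta\neq 0$, which is guaranteed by the standing hypothesis $\theta\neq\pi/2$, and that \eqref{dmn:p2_3} is where the ambient curvature of $\H^2\times\R$ enters, via the $-\cos\theta$ term coming from the curvature tensor $\widetilde R$ in the Codazzi equation.
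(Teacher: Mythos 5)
Your overall strategy---orthogonal coordinates adapted to $T$, the shape operator extracted from Proposition A, and the PDE from the Codazzi equation---is exactly the paper's, and the second and third stages of your outline match its computation (including the observation that $g(\partial_x,T)=1$ makes the right-hand side of Codazzi equal to $\cos\theta\,\partial_y$, with only the $\partial_y$-component carrying new information). The genuine gap is in the first stage: you cannot simply ``choose the parametrization so that $\partial_x=T/\sin^2\theta$'' while also keeping $g(\partial_x,\partial_y)=0$. For $T/\sin^2\theta$ to be a coordinate field $\partial_x$ of an \emph{orthogonal} chart, the $1$-form $T^\flat$ must be closed; equivalently, starting from arbitrary orthogonal coordinates $g=\alpha^2dx^2+\beta^2dy^2$ with $\partial_x\parallel T$, one must first show that $\alpha\sin\theta$ depends on $x$ alone, since only a reparametrization $x\mapsto\tilde x(x)$ (not one involving $y$) preserves orthogonality. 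This is not a flow-box fact, and it does not follow from the vague remark that \eqref{dmn:p1_1} ``controls $\nabla_TT$.'' The paper closes it by computing $A\partial_x$ and $A\partial_y$ from \eqref{dmn:p1_1} \emph{in the unnormalized coordinates} and then imposing the symmetry $g(A\partial_x,\partial_y)=g(\partial_x,A\partial_y)$, which yields $\tan\theta\,\alpha_y+\alpha\,\theta_y=0$, i.e.\ $\partial_y\ln(\alpha\sin\theta)=0$; hence $\alpha=\phi(x)/\sin\theta$ and a change of $x$ gives $\alpha=1/\sin\theta$. (Equivalently: $dT^\flat(X,Y)=g(\nabla_XT,Y)-g(\nabla_YT,X)=\cos\theta\,(g(AX,Y)-g(AY,X))=0$.) So the order of operations matters: the shape-operator/symmetry computation must precede the normalization, not follow it as in your plan.

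A secondary, minor point: with your definitions $A\partial_x=a\,\partial_x+b\,\partial_y$ and $A\partial_y=c\,\partial_x+d\,\partial_y$, symmetry gives $b\beta^2=c\lambda^2$, not $\lambda^2b=\beta^2c$, and the final values you assign to $b$ and $c$ are the matrix entries of \eqref{dmn:p2_2} (columns being images of basis vectors), which are swapped relative to your own definitions. This is only bookkeeping, but it is exactly the kind of slip that would prevent the Codazzi computation from collapsing to \eqref{dmn:p2_3}.
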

\begin{proof}
Here, and for the rest of the paper, we denote, for the sake of simplicity, $\frac{\partial}{\partial x}=\partial_x$ and
$\frac{\partial f}{\partial x}=f_x$ for any function $f$.

From the general theory, choosing an arbitrary point $p\in M$ such
that the angle function $\theta(p)\neq 0$ and
$\theta(p)\neq\frac{\pi}{2}$, we can take local orthogonal coordinates $(x,y)$
such that $\dx$ is in the direction of $T$ and the metric
$g$ on $M$ has the form
\begin{equation}
\label{dmn:pp2_1} g=\a^2(x,y)dx^2+\b^2(x,y)dy^2
\end{equation}
for certain functions $\a$ and $\b$ on $M$.

The Levi-Civita connection for this metric is given by the
following expressions
$$
\n_{\dx} \dx =\frac{\a_x}{\a}\dx - \frac{\a\a_y}{\b^2}\dy,\ \
\n_{\dx} \dy = \n_{\dy} \dx = \frac{\a_y}{\a}\dx + \frac{\b_x}{\b}\dy$$
$$
\n_{\dy} \dy = -\frac{\b\b_x}{\a^2}\dx + \frac{\b_y}{\b}\dy.
$$
In order to determine the shape operator $A$, we use formula
\eqref{dmn:p1_1} for $X=\dx$ and respectively for $X=\dy$. Since
$T=\frac{\sin\theta}{\a}\dx$ we get
\begin{eqnarray}
\label{dmn:pp2_5}
&&A\dx = \frac{\theta_x}{\a}\dx -\tan\theta\frac{\a_y}{\b^2}\dy
\\[2mm]
&&\label{dmn:pp2_6} A\dy= \frac{\theta_y}{\a}\dx + \tan\theta\frac{\b_x}{\a\b}\dy.
\end{eqnarray}

On the other hand, since $A$ is symmetric, i.e. $g(A\dy,\dx)=g(\dy,
A\dx)$ we obtain
\begin{equation}
\label{dmn:pp2_7} \tan\theta\a_y+\a\theta_y=0.
\end{equation}

It follows that the shape operator is given by
$
A = \left( \begin{array}{ccc}
\frac{\theta_x}{\a} & \frac{\theta_y}{\a}  \vspace{2mm} \\
\frac{\a\theta_y}{\b^2}& \frac{\tan\theta\b_x}{\a\b}\end{array}
\right).
$

After a first integration in \eqref{dmn:pp2_7} one obtains
$\displaystyle \a=\frac{\phi(x)}{\sin\theta}$, where $\phi$ is a function on $M$
depending on $x$. Changing the $x$-coordinate we can assume that
$\displaystyle \a=\frac{1}{\sin\theta}$ and substituting it in
\eqref{dmn:pp2_1} we get \eqref{dmn:p2_1}. Moreover, replacing
$\a$ in the previous expression of $A$ we obtain the shape operator
given exactly by formula \eqref{dmn:p2_2}.
In order to find a relation between $\b$ and $\theta$, we substitute in the
Codazzi equation {\bf (E.C.)} $X=\dx,\ Y=\dy$, $T=\sin^2\theta\dx$ and we get
\begin{equation*}
\n_{\dx} (A\dy) -\n_{\dy}(A\dx)-\cos\theta\dy=0.
\end{equation*}
After straightforward calculations, the PDE \eqref{dmn:p2_3} is obtained, concluding the proof.

\end{proof}
\begin{remark}\rm
Every two functions $\theta$ and $\beta$ defined on a smooth simply connected surface $M$, related by
\eqref{dmn:p2_3}, determine an isometric immersion of $M$ into ${\mathbb{H}}^2\times{\mathbb{R}}$
such that the shape operator is given by \eqref{dmn:p2_2}.
\end{remark}
\begin{proof}
Construct the metric $g$ as in \eqref{dmn:p2_1} and define the field of operators $A$ such that
its matrix in the local basis $\{\partial_x,\partial_y\}$ is given by \eqref{dmn:p2_2}.
It is easy to notice that all $A_p$ are symmetric ($p\in M$). Take $T=\sin^2\theta~\partial_x$.
At this moment Theorem B plays an essential role, and combining it with the previous Proposition~\ref{dmn:prop1}
we get the conclusion.

\end{proof}
Once the background of the study of a surface $M$ in $\H^2\times \R$ is established,
we are interested to find some particular classes
of surfaces involving the minimality and flatness conditions.
Following the same idea as in the general case but under the
restriction imposed by the minimality condition, one gets
\begin{proposition} 
\label{dmn:prop2}
If a surface $M$ isometrically immersed in $\H^2\times\R$ is
minimal and $\theta\neq0,\frac\pi 2$ then we can choose local coordinates $(x,y)$ such that
$\dx$ is in the direction of $T$, the metric is given by
\begin{equation}
\label{dmn:p3_1} g=\frac{1}{\sin^2\theta}~(dx^2 +dy^2)
\end{equation}
being conformal equivalent to the Euclidean metric
and the shape operator w.r.t the basis $\{\dx, \dy\}$ has the
expression
\begin{equation}
\label{dmn:p3_2}
A = \sin\theta\left(
\begin{array}{ccc}
\theta_x & \theta_y  \\[2mm]
\theta_y& -\theta_x
\end{array} \right).
\end{equation}
Moreover, denoting $\Delta=\sin^2\theta(\dx^2+\dy^2)$ the Laplacian
of the surface $M$, the angle function $\theta$ satisfies the PDE
\begin{equation}
\label{dmn:p3_3} \Delta
\ln\left(\tan\left(\frac{\theta}{2}\right)\right)=-\cos\theta.
\end{equation}
\end{proposition}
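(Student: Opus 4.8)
The plan is to specialize Proposition~\ref{dmn:prop1} to the minimal case and simplify. Starting from the metric \eqref{dmn:p2_1} and the shape operator \eqref{dmn:p2_2}, the minimality condition $\mathrm{tr}\,A=0$ reads
$$
\theta_x\sin\theta+\frac{\sin^2\theta\,\b_x}{\cos\theta\,\b}=0,
$$
which rearranges to $\b_x/\b=-\cos\theta\,\theta_x/\sin\theta=(\ln\cos\theta)_x$ (using that $\theta$ is never $0$ or $\pi/2$, so $\cos\theta\neq0$ and $\sin\theta\neq0$). Hence $\b(x,y)=\cos\theta(x,y)\,\psi(y)$ for some positive function $\psi$. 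First I would then perform a change of the $y$-coordinate, $y\mapsto\tilde y$ with $d\tilde y=\psi(y)\,dy$; this leaves $\dx$ (hence $T$) untouched and replaces $\b$ by $\cos\theta$. With $\b=\cos\theta$ one computes $\b_x=-\sin\theta\,\theta_x$, so the $(2,2)$-entry of \eqref{dmn:p2_2} becomes $\sin^2\theta\cdot(-\sin\theta\,\theta_x)/(\cos\theta\cdot\cos\theta)$; but wait — this does not obviously collapse to $-\sin\theta\,\theta_x$ unless an additional relation holds.

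So the real point is that $\b=\cos\theta$ together with the PDE \eqref{dmn:p2_3} forces the metric to become conformally flat in the stated way. The cleaner route, which I would actually follow, is: once $\mathrm{tr}\,A=0$, write the shape operator in the $\{\dx,\dy\}$ basis as $A=\begin{pmatrix}a&b\\c&-a\end{pmatrix}$ with $a=\theta_x\sin\theta$, $b=\theta_y\sin\theta$, $c=\theta_y/(\sin\theta\,\b^2)$; symmetry of $A$ with respect to $g$ is already built in, but now I also want the $g$-symmetric matrix to be genuinely symmetric as a matrix, i.e. $g_{22}c=g_{11}b$ up to the metric — reconciling $b$ and $c$ gives $\theta_y\sin\theta\cdot\b^2=(1/\sin^2\theta)\cdot\theta_y/\sin\theta$ only if... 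Actually the correct bookkeeping is $g(A\dx,\dy)=g(A\dy,\dx)$, which in Proposition~\ref{dmn:prop1} already produced the off-diagonal consistency; here the new input is tracelessness. The efficient approach: combine $\b_x/\b=(\ln\cos\theta)_x$ with \eqref{dmn:p2_3}. Substituting $\b_x=\b(\ln\cos\theta)_x$ and $\b_{xx}$ accordingly into \eqref{dmn:p2_3}, the $x$-derivative terms should telescope, leaving a relation purely between $\theta$, its $y$-derivatives, and $\b$; using $\b=\cos\theta\,\psi(y)$ and the coordinate change above, this relation becomes \eqref{dmn:p3_3}. In parallel, with $\b=\cos\theta$ the metric \eqref{dmn:p2_1} is $\frac{1}{\sin^2\theta}dx^2+\cos^2\theta\,dy^2$ — not yet \eqref{dmn:p3_1}; so a further $y$-reparametrization $d\bar y=\sin\theta\cos\theta\,(\text{something})$ cannot work since $\theta$ depends on $x$. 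This signals that I should instead derive $\b=1/\sin\theta$ directly.

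Therefore the step I expect to be the crux is showing $\b=1/\sin\theta$ (after a suitable $y$-reparametrization), equivalently that minimality plus the principal-direction hypothesis forces the conformal factor in the $y$-direction to match that in the $x$-direction. Here is how I would settle it: from $\b_x/\b=(\ln\cos\theta)_x$ we only get $\b=\cos\theta\,\psi(y)$; but the $(1,2)$ and $(2,1)$ entries of the \emph{matrix} $A$ in \eqref{dmn:p2_2} are $\theta_y\sin\theta$ and $\theta_y/(\sin\theta\,\b^2)$, and for the minimal surface I additionally demand — no, these need not be equal as matrix entries. Let me reconsider: the honest derivation keeps $A$ as in \eqref{dmn:p2_2}, imposes $\mathrm{tr}\,A=0$ to get $\b=\cos\theta\,\psi(y)$, reparametrizes $y$ to kill $\psi$, and then \emph{recomputes} the shape operator: the $(1,2)$ entry stays $\theta_y\sin\theta$, the $(2,1)$ entry becomes $\theta_y/(\sin\theta\cos^2\theta)$, and the $(2,2)$ entry becomes $-\theta_x\sin\theta$ after using $\b_x=-\sin\theta\,\theta_x$. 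For the resulting $A$ to be the shape operator of a \emph{minimal} surface we need nothing more, but to reach the symmetric form \eqref{dmn:p3_2} we need the $(2,1)$ entry to equal $\theta_y\sin\theta$, i.e. $\sin\theta\cos^2\theta=\sin\theta$, forcing $\cos^2\theta=1$ — contradiction unless $\theta_y\equiv0$. The resolution of this apparent paradox is that \eqref{dmn:p3_2} is written in an \emph{orthonormal-adapted} basis or after yet another change making $g_{11}=g_{22}$; so the correct last step is: perform the $x$-reparametrization that, combined with $\b=\cos\theta\,\psi(y)$ and a $y$-reparametrization, brings $g$ to the form $\frac{1}{\sin^2\theta}(dx^2+dy^2)$ — this is possible precisely because \eqref{dmn:p2_3} (the integrability condition) guarantees such a simultaneous reparametrization exists — and then \eqref{dmn:p3_2} and \eqref{dmn:p3_3} follow by direct substitution, with \eqref{dmn:p3_3} obtained by rewriting \eqref{dmn:p2_3} in these coordinates and recognizing the left side as $\Delta\ln\tan(\theta/2)$ since $\big(\ln\tan(\theta/2)\big)'=1/\sin\theta$. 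I would present the conformal reparametrization step carefully, as it is the one place where the PDE \eqref{dmn:p2_3} is genuinely used, and verify \eqref{dmn:p3_3} by noting $\Delta f=\sin^2\theta\,(f_{xx}+f_{yy})$ and $\partial_x\ln\tan(\theta/2)=\theta_x/\sin\theta$, so the chain rule converts \eqref{dmn:p2_3} term by term into \eqref{dmn:p3_3}.
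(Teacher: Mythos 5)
There is a genuine gap, and it originates in a single integration error that then derails the rest of your argument. From $\mathrm{tr}\,A=0$ you correctly obtain
\begin{equation*}
\frac{\b_x}{\b}=-\frac{\cos\theta\,\theta_x}{\sin\theta},
\end{equation*}
but this right-hand side is $-(\ln\sin\theta)_x$, \emph{not} $(\ln\cos\theta)_x$ (which would be $-\sin\theta\,\theta_x/\cos\theta$). Integrating correctly gives $\b(x,y)=\psi(y)/\sin\theta$, and after the $y$-reparametrization killing $\psi$ one has $\b=1/\sin\theta$. Then everything you found problematic disappears: $g_{22}=\b^2=1/\sin^2\theta=g_{11}$, so the metric is \eqref{dmn:p3_1} with no further reparametrization; the $(2,1)$ entry of \eqref{dmn:p2_2} becomes $\theta_y\sin^2\theta/\sin\theta=\theta_y\sin\theta$; and the $(2,2)$ entry becomes $-\theta_x\sin\theta$ since $\b_x=-\cos\theta\,\theta_x/\sin^2\theta$. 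This is exactly the paper's (short) proof. Your erroneous $\b=\cos\theta\,\psi(y)$ is what produced the apparent contradictions ($\cos^2\theta=1$, the non-conformal metric $\frac{1}{\sin^2\theta}dx^2+\cos^2\theta\,dy^2$), and the patch you propose to escape them does not work.

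Concretely, the patch fails on two counts. First, you invoke a ``simultaneous $x$- and $y$-reparametrization guaranteed by the integrability condition \eqref{dmn:p2_3}'' to force $g_{11}=g_{22}$; but \eqref{dmn:p2_3} plays no role in establishing the isothermal form here (it is only used afterwards, to convert into \eqref{dmn:pp3_1} and hence \eqref{dmn:p3_3}), and no argument is given that such a reparametrization exists. Second, a further change of the $x$-coordinate is not available: Proposition~\ref{dmn:prop1} already spent that freedom to normalize $\a=1/\sin\theta$, and reparametrizing $x$ again would destroy that normalization and with it the formulas \eqref{dmn:p2_1}--\eqref{dmn:p2_2} you are quoting. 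Your final step — recognizing $\partial_x\ln\tan(\theta/2)=\theta_x/\sin\theta$ and converting \eqref{dmn:p2_3} (with $\b=1/\sin\theta$) into \eqref{dmn:p3_3} via $\Delta f=\sin^2\theta(f_{xx}+f_{yy})$ — is correct and agrees with the paper; only the derivation of $\b=1/\sin\theta$ needs to be repaired, and the repair is the one-line integration above.
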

\begin{proof}
Using the results included in Proposition~\ref{dmn:prop1} and exploiting \eqref{dmn:p2_2}, the minimality
condition $H=0$ becomes
$
\cos\theta\theta_x\b+\sin\theta\b_x=0.
$
Integrating once w.r.t. $x$ one
gets $\displaystyle \b(x,y)=\frac{\psi(y)}{\sin\theta}$ for some
function $\psi$ on $M$ depending only on $y$. Making a change of the
$y$-coordinate one can assume
$\displaystyle\b(x,y)=\frac{1}{\sin\theta}$. Substituting it
in \eqref{dmn:p2_1} and \eqref{dmn:p2_2} the relations \eqref{dmn:p3_1} and \eqref{dmn:p3_2} are
proved. Note that $(x,y)$ are {\em isothermal coordinates}.

Computing the partial derivatives of $\b$ and replacing them in \eqref{dmn:p2_3}, an equivalent
condition is obtained
\begin{equation}
\label{dmn:pp3_1}
\cos\theta(\theta_x^2+\theta_y^2-1)-\sin\theta(\theta_{xx}+\theta_{yy})=0.
\end{equation}
By straightforward computations this is equivalent with \eqref{dmn:p3_3}.

\end{proof}
\begin{remark}\rm
Every smooth function $\theta$ defined on a smooth simply connected surface $M$
satisfying the elliptic equation \eqref{dmn:p3_3}, gives rise to an isometric minimal immersion of
$M$ into ${\mathbb{H}}^2\times{\mathbb{R}}$ such that the shape operator is given by \eqref{dmn:p3_2}.
\end{remark}

\begin{example}\rm
In order to give an example of angle function $\theta$ that corresponds
to a minimal surface in $\H^2\times\R$,
it is necessary to solve first the equation \eqref{dmn:pp3_1}. Let us
look for $\theta$ such that $\theta_x=k\theta_y$, for some non-vanishing real constant $k$.
Taking the derivatives with respect to $x$, respectively $y$, we get $\theta_{xx}=k\theta_{xy}$
and $\theta_{xy}=k\theta_{yy}$.
Substituting in \eqref{dmn:pp3_1} the expressions
$\theta_x^2+\theta_y^2=(k^2+1)\theta_y^2$ and
$\theta_{xx}+\theta_{yy}=\frac{k^2+1}{k}\theta_{xy}$ and integrating w.r.t. $x$ we get that
$
\frac{(k^2+1)\theta_y^2-1}{\sin^2\theta}=c(y).
$
It can be proved that $c(y)$ is constant, hence
$
\frac{\theta_y}{\sqrt{1+c\sin^2\theta}}=\pm \frac{1}{\sqrt{k^2+1}}
$
{\rm and}
$
\frac{\theta_x}{\sqrt{1+c\sin^2\theta}}=\pm \frac{k}{\sqrt{k^2+1}}\ .
$
Denoting $\ \displaystyle F(\theta\ | -c)=\int_0^\theta\frac{1}{\sqrt{1+c\sin^2\theta(t)}}\ dt$
the elliptic integral of first kind, it's differential is
$
 d\ F(\theta\ | -c)=\pm\ d\left(\frac{kx+y}{\sqrt{k^2+1}}\right)
$. It follows that $\theta=am\left(\pm\frac{kx+y}{\sqrt{k^2+1}}\ \big|\ -c\right)$, where $am$ denotes
the \emph{Jacobi amplitude}, namely the inverse of the elliptic function $F(\theta\ |\ -c)$.
\end{example}
If we look at the surfaces $M$ in $\H^2\times\R$ being at the same time
minimal and flat, we obtain the following classification theorem.
\begin{theorem}
The only surfaces in $\H^2\times\R$ which are both flat and minimal are given by
$f\times\R$, where $f$ is a geodesic line in $\H^2$.
\end{theorem}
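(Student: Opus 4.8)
The plan is to combine the flatness and minimality assumptions through the Gauss‑equation formula \eqref{dmn:K}, working pointwise. At an arbitrary $p\in M$, minimality gives $\operatorname{trace}A_p=0$, so $A_p$ has eigenvalues $\pm\lambda$ for some $\lambda\ge 0$ and hence $\det A_p=-\lambda^2\le 0$; flatness ($K\equiv 0$) and \eqref{dmn:K} give $\det A_p=\cos^2\theta(p)\ge 0$. Comparing the two, both quantities must vanish: $\lambda=0$ and $\cos\theta(p)=0$. Since $p$ is arbitrary, $A\equiv 0$ (the surface is totally geodesic in $\H^2\times\R$) and $\theta\equiv\frac{\pi}{2}$, so $\partial_t=T$ is everywhere tangent to $M$ and, by \eqref{dmn:p1_1}, parallel for $\nabla$. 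Note that the coordinate normalisations of Proposition~\ref{dmn:prop2} are not available here, precisely because the excluded value $\theta=\frac{\pi}{2}$ is forced; the argument must be intrinsic.

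Next I would identify $M$ geometrically. Since $\partial_t$ is tangent to $M$ at every point, each maximal integral curve of $\partial_t$ through a point of $M$, that is each vertical line $\{p\}\times\R$, lies in $M$; equivalently $M$ is invariant under the vertical translations of $\H^2\times\R$ (the flow of $\partial_t$). Hence, up to such a translation, $M$ is the cylinder $f\times\R$ over its projection $f$ to $\H^2$, and $f$ is a regular curve because $M$ is a surface. It then remains to decide which of these cylinders are totally geodesic. Let $e$ be the unit tangent of $f$, regarded as a horizontal vector field along $M$, and $n$ the unit normal of $f$ inside $\H^2$; then $T(M)=\operatorname{span}\{e,\partial_t\}$ and the normal bundle is spanned by $n$. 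Because $\partial_t$ is parallel in the ambient space, the Gauss formula {\bf(G)} gives $h(\partial_t,\partial_t)=h(e,\partial_t)=0$ automatically, while $\widetilde\nabla_e e=\nabla^{\H^2}_e e$ yields $h(e,e)=\widetilde g(\nabla^{\H^2}_e e,\,n)\,n=\kappa_g\,n$, where $\kappa_g$ is the geodesic curvature of $f$ in $\H^2$. Thus $h\equiv 0$ if and only if $\kappa_g\equiv 0$, i.e. $f$ is a geodesic line of $\H^2$.

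For the converse, the same computation shows that when $f$ is a geodesic of $\H^2$ the cylinder $f\times\R$ has $h\equiv 0$, hence is minimal, and its angle function is $\frac{\pi}{2}$ (as $\partial_t$ is tangent), so by \eqref{dmn:K} it is flat. Combining the two implications gives the stated classification.

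The only step I expect to require care is the passage from ``totally geodesic with $\partial_t$ everywhere tangent'' to ``a vertical cylinder $f\times\R$''; everything else reduces to \eqref{dmn:K}, Proposition~A and a one–line computation of the second fundamental form. Alternatively, one can bypass the cylinder argument by invoking Theorem~B: the compatibility data $(g,\,A\equiv 0,\,T,\,\theta\equiv\frac{\pi}{2})$ on a flat simply connected surface with $\|T\|=1$ coincides, up to an isometry of $M$, with the data carried by $f\times\R$ for $f$ a geodesic line, so the uniqueness part of Theorem~B identifies the two immersions up to an ambient isometry.
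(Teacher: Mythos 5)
Your proof is correct, but it takes a genuinely different route from the paper's. The paper restricts to the open set where $\theta\neq 0,\frac\pi2$, invokes the isothermal coordinates and the explicit shape operator \eqref{dmn:p3_2} of Proposition~\ref{dmn:prop2}, and reads off from \eqref{dmn:K} the identity $\theta_x^2+\theta_y^2=-\cot^2\theta$, whose sign incompatibility forces the contradiction; it then disposes of $\theta=0$ and simply asserts the conclusion in the remaining case $\theta=\frac\pi2$. You instead argue pointwise and coordinate-free: minimality makes $A_p$ trace-free symmetric, so $\det A_p\le 0$, while flatness and \eqref{dmn:K} give $\det A_p=\cos^2\theta(p)\ge 0$, whence $A\equiv 0$ and $\theta\equiv\frac\pi2$ everywhere, with no case split on $\theta$ and no need for the normalised coordinates (which, as you rightly note, are unavailable exactly in the regime the hypotheses force). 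Algebraically your inequality is the invariant form of the paper's identity, but your version buys two things: it sidesteps the slightly delicate open-set/constancy bookkeeping implicit in the paper's trichotomy on $\theta$, and it supplies the final identification of the surface as a vertical cylinder over a geodesic (via $h(e,e)=\kappa_g\,n$ and the verticality of $M$), a step the paper leaves unproved. The one place to be careful is your claim that each \emph{maximal} integral curve of $\partial_t$ lies in $M$: for a merely immersed surface this holds only locally, which is all the (local) classification needs; your alternative closing argument via Theorem~B handles this cleanly in the simply connected case.
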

\begin{proof}
If $\theta\neq 0$ or $\frac\pi 2$, then the metric takes the form \eqref{dmn:p3_1}
and the shape operator is given by \eqref{dmn:p3_2}. Asking for
the Gaussian curvature to vanish identically,
we obtain from \eqref{dmn:K} and using \eqref{dmn:p3_2} that
$\theta$ fulfills $\theta_x^2+\theta_y^2=-\cot^2\theta.$ Hence $\theta$ is constant different from $0$ or $\frac\pi 2$, which is
a contradiction. A surface with $\theta=0$ is not flat, hence $\theta=\frac\pi 2$ and the surface is of the form
$f\times\R$ with $f$ geodesic line in $\H^2$.

\end{proof}

\section{Surfaces in $\H^2\times\R$ with a canonical principal direction}

The problem of studying surfaces for which $T$ is a principal
direction arises from the previous papers on the constant angle
surfaces. We recall \cite{dmn:DM09} where it is proved that for a
constant angle surface $M$ in $\H^2\times\R$, i.e. for $\theta\in [0,\pi]$
constant, $T$ is a principal direction of the surface with
 the corresponding principal curvature $0$.
One natural generalization consists of the case when $T$ is assumed to be
a principal direction but the corresponding principal curvature
is different from $0$. At this point we denominate $T$ as a
{\em canonical principal direction}.

Let us say few words about the ambient space. There are many models describing the hyperbolic plane
and we will use the hyperboloid model, known also as the {\em Minkowski model}. Recall some basic facts
about this model which will be used in the sequel. We denote $\R_1^3$ the Minkowski $3$-space endowed with
the {\em Lorentzian metric}
$$
\<~,~\>=dx_1^2+dx_2^2-dx_3^2.
$$
For symmetry reasons, $\H^2$ can be modeled only by the upper sheet of the hyperboloid with two sheets,
namely,
$$
\H^2=\{(x_1,x_2,x_3)\in \R_1^3 ~|~ \ x_1^2+x_2^2-x_3^2=-1, ~ x_3>0 \}.
$$
The {\em Lorentzian cross-product} for two vectors $u,v\in\R_1^3$ is defined as
$\boxtimes:\R_1^3\times\R_1^3\rightarrow\R_1^3,$
\newline
$\big((u_1,u_2,u_3),\ (v_1,v_2,v_3)\big)\mapsto(u_2v_3-u_3v_2,\ u_3v_1-u_1v_3,\ u_2v_1-u_1v_2)$.

Concerning the curves in Minkowski 3-space, we recall that a regular curve $\gamma:I\rightarrow \R_1^3$ is called {\em spacelike} if
$\<\dot{\gamma},\dot{\gamma}\> > 0$ everywhere, {\em timelike} if
$\<\dot{\gamma},\dot{\gamma}\> <0$ in any point, and respectively, {\em lightlike} if $\<\dot{\gamma},\dot{\gamma}\> =0$ everywhere.

In order to study under which conditions $T$ is a canonical
principal direction, we regard the surface $M$ as a surface immersed
in $\R_1^3\times\R$ (also denoted $\R_1^4$) having codimension $2$.

The metric on the ambient space is given by
$\widetilde{g}=dx_1^2+dx_2^2-dx_3^2+dt^2$.

At this point, let us consider the surface $M$ given by the
immersion $F:M\rightarrow \R_1^3\times\R$, $F=(F_1,\ F_2,\ F_3,\
F_4)$. Denote by $\widetilde{\xi}=(F_1,\ F_2,\ F_3,\ 0)$ the normal to
$\H^2\times\R$ in the points of $M$ and by $\xi=(\xi_1,\ \xi_2,\ \xi_3,\ \cos\theta)$ the normal to $M$ in
$\H^2\times\R$. (See for details \cite{dmn:DM09}.) Moreover, from now on, by $D^\perp$
we mean the normal connection of $M$ in $\R_1^4$ and by $R^\perp$ the normal curvature
$$
R^\perp(X,\ Y)=[D_{X}^\perp,\ D_{Y}^\perp ]- D^\perp_{[X,\ Y]}\ ,{\rm\ for\ all}\ X,Y\ {\rm tangent\ to}\ M.
$$

The first result in this section is the characterization
theorem of surfaces $M$ having $T$ as principal direction in terms of
the normal curvature.

\begin{theorem}[{\bf Characterization Theorem}]
\label{dmn:thm2}
Let $M$ be a surface isometrically immersed in $\H^2\times\R$ such that $\theta\neq 0$. $T$
is a principal direction if and only if $M$ is normally
flat in $\R_1^3\times\R$.
\end{theorem}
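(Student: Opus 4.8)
The plan is to compute the normal curvature $R^\perp$ of $M$ in $\R_1^4$ explicitly, using the decompositions $\widetilde\xi=(F_1,F_2,F_3,0)$ and $\xi=(\xi_1,\xi_2,\xi_3,\cos\theta)$ for the two normal directions, and then to recognize that the vanishing of $R^\perp$ is equivalent to the off-diagonal entries of the shape operator $A$ in a $T$-adapted frame being zero, i.e.\ to $T$ being a principal direction. First I would set up an orthonormal (or at least orthogonal) tangent frame $\{e_1,e_2\}$ on $M$ with $e_1$ in the direction of $T$; Proposition~\ref{dmn:prop1} already supplies such coordinates $(x,y)$ with $\partial_x\parallel T$, the metric \eqref{dmn:p2_1}, and the shape operator \eqref{dmn:p2_2} relative to the $\H^2\times\R$-normal $\xi$. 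The second fundamental form of $M$ in $\R_1^4$ then has two components: the part valued in $\R\xi$, governed by $A$ as in \eqref{dmn:p2_2}, and the part valued in $\R\widetilde\xi$, which is essentially the second fundamental form of $\H^2\times\R$ in $\R_1^4$ restricted to $M$. The latter is easy to control because $\H^2\times\R$ is totally umbilical (indeed $\widetilde\nabla^0_X\widetilde\xi = X_H$, with $\widetilde\xi$ itself playing the role of the $\H^2$-position vector), so the $\widetilde\xi$-shape operator of $M$ is just $X\mapsto X_H = X - g(X,\partial_t)\partial_t$ projected back to $TM$, which in the $(x,y)$-frame is diagonal with a very simple form.

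Next I would write down $R^\perp$ using the Ricci equation: for normal vector fields $\eta,\zeta$ and tangent $X,Y$,
\[
\widetilde g\big(R^\perp(X,Y)\eta,\zeta\big) = \widetilde g\big([A_\eta,A_\zeta]X,Y\big),
\]
so that $M$ being normally flat in $\R_1^4$ is equivalent to the shape operators $A_\xi = A$ and $A_{\widetilde\xi}$ commuting (the normal plane being $2$-dimensional, $R^\perp$ has a single essential component $\widetilde g(R^\perp(X,Y)\widetilde\xi,\xi)$). Now $A_{\widetilde\xi}$ is diagonal in the basis $\{\partial_x,\partial_y\}$ — with distinct eigenvalues away from $\theta=0,\tfrac\pi2$, since $\partial_x$ has a $\partial_t$-component and $\partial_y$ does not — so $[A_\xi,A_{\widetilde\xi}]=0$ holds if and only if $A_\xi=A$ is also diagonal in $\{\partial_x,\partial_y\}$, i.e.\ if and only if the off-diagonal entries $\theta_y\sin\theta$ and $\tfrac{\theta_y}{\sin\theta\beta^2}$ of \eqref{dmn:p2_2} vanish. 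Since $\partial_x$ is along $T$, $A$ being diagonal in this basis is exactly the statement that $T$ is a principal direction. One should double-check the borderline issue $\theta=\tfrac\pi2$: there $T$ is automatically an eigenvector (with $\cos\theta=0$, formulas \eqref{dmn:p1_1}--\eqref{dmn:p1_2} degenerate and the surface is a product $f\times\R$), and $A_{\widetilde\xi}$ may fail to have distinct eigenvalues, but then both $R^\perp=0$ and the principal-direction property hold trivially, so the equivalence is preserved; the hypothesis $\theta\neq0$ excludes the other degenerate case.

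The main obstacle I expect is the bookkeeping of the two normal shape operators and the sign conventions of the Lorentzian ambient metric: one must be careful that $\widetilde\xi$ has $\<\widetilde\xi,\widetilde\xi\>=-1$ (it lies in the timelike-ish hyperboloid direction) whereas $\xi$ is spacelike, so the Weingarten-type formula $\widetilde\nabla^0_X\widetilde\xi = -A_{\widetilde\xi}X$ carries a sign that affects the identification of $A_{\widetilde\xi}$ with the $H$-projection, and similarly the Ricci equation in a mixed-signature normal bundle needs the correct placement of the metric coefficients. Once those signs are pinned down (most cleanly by computing $\widetilde\nabla^0_{\partial_x}\widetilde\xi$ and $\widetilde\nabla^0_{\partial_y}\widetilde\xi$ directly from $\widetilde\xi=(F_1,F_2,F_3,0)$ and comparing with $\partial_x,\partial_y$), the computation of $R^\perp$ reduces to the commutator of an explicit $2\times2$ matrix with an explicit diagonal matrix, and the claimed equivalence drops out. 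An alternative, slightly slicker route avoiding the frame computation is to use that $D^\perp\widetilde\xi$ is already determined by $A$ and $\theta$ (from the structure equations of $\H^2\times\R\subset\R_1^4$), compute $R^\perp(X,Y)\widetilde\xi$ by differentiating, and observe that its $\xi$-component is a multiple of $g(AT^\perp,\cdot)$ where $T^\perp$ spans the orthogonal complement of $T$ in $TM$; this vanishes for all $X,Y$ precisely when $AT\parallel T$.
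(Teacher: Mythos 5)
Your argument is correct, but it follows a genuinely different route from the paper's. The paper computes the normal connection directly, $D^\perp_X\tilde{\xi}=-\cos\theta\, g(X,T)\,\xi$ and $D^\perp_X\xi=\cos\theta\, g(X,T)\,\tilde{\xi}$, and then evaluates the normal curvature from its definition in the coordinates of Proposition~\ref{dmn:prop1}, obtaining $R^\perp(\partial_x,\partial_y)\xi=\sin\theta\,\theta_y\,\tilde{\xi}$; normal flatness is thus read off as $\theta_y=0$, which by the explicit matrix \eqref{dmn:p2_2} is exactly the vanishing of the off-diagonal entries of $A$. You instead invoke the Ricci equation for the flat ambient $\R_1^4$ and reduce normal flatness to $[A_\xi,A_{\tilde{\xi}}]=0$; since $A_{\tilde{\xi}}X=-X+g(X,T)T$ is diagonal in a $T$-adapted frame with eigenvalues $-\cos^2\theta$ and $-1$, commutation forces $T$ to be an eigenvector of $A_\xi=A$. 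This is arguably cleaner: it needs neither the coordinates nor the PDE machinery of Proposition~\ref{dmn:prop1}, only the elementary computation of $A_{\tilde\xi}$ from $\tilde\xi=(F_1,F_2,F_3,0)$, and it makes transparent where the hypothesis $\theta\neq 0$ (and implicitly $\theta\neq\pi$) enters, namely through the distinctness of $-\cos^2\theta$ and $-1$. Two small slips you should fix, neither of which damages the proof: $\H^2\times\R$ is \emph{not} totally umbilical in $\R_1^4$ (its shape operator $X\mapsto -X_H$ has eigenvalues $-1,-1,0$), although the formula you actually use is the correct one; and the eigenvalues of $A_{\tilde\xi}$ coincide only where $\cos^2\theta=1$, i.e.\ at $\theta=0,\pi$, not at $\theta=\tfrac\pi2$, so your cautionary aside about $\theta=\tfrac\pi2$ is unnecessary --- the commutator argument goes through verbatim there.
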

\begin{proof}
With the above considerations, for any $X$ tangent to $M$, we compute
$D^\perp_X\tilde{\xi}=-\cos\theta\<X,T\>\xi$
which implies
$D^\perp_X\xi=\cos\theta\<X,T\>\tilde{\xi}$.

Since Proposition~\ref{dmn:prop1} holds, the metric is given by
\eqref{dmn:p2_1}, and using the previous expressions, one has
$$
R^\perp(\dx,\ \dy)\xi=\sin\theta\theta_y\tilde{\xi}\qquad {\rm and}\qquad
R^\perp(\dx,\ \dy)\tilde{\xi}=-\sin\theta\theta_y \xi.
$$
Taking into account that $\xi$ and $\tilde{\xi}$ are unitary and $\sin\theta$ cannot vanish,
we get that $M$ is normally flat if and only if $\theta_y=0$.
On the other hand, $T$ is a canonical principal direction if and only if $\theta_y=0$.
This follows from expression \eqref{dmn:p2_2} of the Weingarten operator $A$.
Hence, we get the conclusion.
\end{proof}
An analogue result with Proposition~\ref{dmn:prop1}, formulated for surfaces in
$\H^2\times\R$ having $T$ as principal direction, is the following
\begin{proposition}
\label{dmn:prop3}                      
Let $M$ be isometrically immersed in $\H^2\times\R$ such that $\theta\neq 0, \frac\pi 2$ with $T$ a
principal direction. Then, we can choose local coordinates $(x, y)$ such that $\dx$ is in the direction of $T$, the metric is given by
\begin{equation}
\label{dmn:p4_1} g=dx^2+\b^2(x,\ y)dy^2
\end{equation}
and the shape operator w.r.t. $\{\dx,\ \dy\}$ can be written as
\begin{equation}
\label{dmn:p4_2}
A = \left( \begin{array}{ccc}
\theta_x & 0 \\
0&\tan\theta\frac{\b_x}{\b}
\end{array} \right).
\end{equation}
Moreover, the functions $\theta$ and $\beta$ are related by the PDE
\begin{equation}
\label{dmn:p4_3} \b_{xx} + \tan\theta\theta_x\b_{x} -\b\cos^2\theta=0
\end{equation}
and $\theta_y=0$.
\end{proposition}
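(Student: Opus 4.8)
The plan is to specialize the computation in Proposition~\ref{dmn:prop1} to the case where $T$ is a principal direction, using the Characterization Theorem~\ref{dmn:thm2} as the key input. Since $\theta\neq 0,\frac\pi2$, Proposition~\ref{dmn:prop1} already furnishes local coordinates $(x,y)$ with $\partial_x$ in the direction of $T$, the metric in the form \eqref{dmn:p2_1}, the shape operator \eqref{dmn:p2_2}, and the PDE \eqref{dmn:p2_3}. First I would invoke Theorem~\ref{dmn:thm2}: $T$ being a principal direction forces $M$ to be normally flat in $\R_1^3\times\R$, and from the proof of that theorem this is equivalent to $\theta_y=0$. (Alternatively, reading off the off-diagonal entries of \eqref{dmn:p2_2} directly: $T=\sin^2\theta\,\partial_x$ is an eigenvector of $A$ iff the $(2,1)$-entry $\frac{\theta_y}{\sin\theta\b^2}$ vanishes, i.e. iff $\theta_y=0$.) So the first step establishes $\theta_y=0$.

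Next I would substitute $\theta_y=0$ into the metric, the shape operator, and the PDE. The metric \eqref{dmn:p2_1} becomes $g=\frac{1}{\sin^2\theta}dx^2+\b^2 dy^2$; but now $\theta$ depends only on $x$, so a further change of the $x$-coordinate (replacing $x$ by $\tilde x=\int \frac{dx}{\sin\theta(x)}$, which is legitimate since $\sin\theta\neq0$) normalizes the coefficient of $dx^2$ to $1$, yielding \eqref{dmn:p4_1}. One must check that this reparametrization keeps $\partial_x$ in the direction of $T$ (it rescales $\partial_x$ by a positive function, so the direction is preserved) and recompute how $\b$, $\theta_x$, etc. transform under it. Plugging $\theta_y=0$ into \eqref{dmn:p2_2} and then applying the coordinate change gives the diagonal form \eqref{dmn:p4_2}: the $(1,1)$-entry $\theta_x\sin\theta$ becomes $\theta_x$ in the new coordinate (the chain rule exactly cancels the $\sin\theta$), the off-diagonal entries vanish, and the $(2,2)$-entry $\frac{\sin^2\theta\,\b_x}{\cos\theta\,\b}$ becomes $\tan\theta\frac{\b_x}{\b}$. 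Finally, setting $\theta_y=0$ in \eqref{dmn:p2_3} kills the third and fourth terms, leaving $\frac{\sin^2\theta}{\cos\theta}\frac{\b_{xx}}{\b}+\frac{\sin\theta\,\theta_x}{\cos^2\theta}\frac{\b_x}{\b}-\cos\theta=0$; multiplying through by $\frac{\cos\theta\,\b}{\sin^2\theta}$ and using $\frac{1}{\sin^2\theta}\cdot\frac{\sin\theta\,\theta_x}{\cos\theta}=\frac{\theta_x}{\sin\theta\cos\theta}$... here care is needed to match the stated form $\b_{xx}+\tan\theta\,\theta_x\b_x-\b\cos^2\theta=0$, which is what one gets after also accounting for the $x$-reparametrization, so I would redo this PDE in the new coordinate from scratch rather than transform \eqref{dmn:p2_3} term by term.

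The main obstacle I anticipate is bookkeeping the coordinate change $x\mapsto\int\frac{dx}{\sin\theta}$ consistently across all three objects (metric, shape operator, PDE): it is easy to get the chain-rule factors wrong, especially in the PDE where second derivatives of $\b$ in $x$ pick up correction terms involving $\theta_x$. An alternative that sidesteps this is to derive \eqref{dmn:p4_1}--\eqref{dmn:p4_3} ab initio: start again from an orthogonal coordinate system with $\partial_x$ along $T$ and metric $\a^2 dx^2+\b^2 dy^2$, impose that $A$ is diagonal in this basis (which is the principal-direction hypothesis), rerun the Gauss--Weingarten and Codazzi computations of Proposition~\ref{dmn:prop1} under this extra symmetry, and normalize $\a$ at the end. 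Either way the conclusion $\theta_y=0$ should be recorded both as a hypothesis-consequence (via Theorem~\ref{dmn:thm2} or the shape-operator argument) and as part of the final statement. I would close by noting that, conversely, any $(\theta,\b)$ on a simply connected $M$ satisfying \eqref{dmn:p4_3} with $\theta_y=0$ yields such an immersion via Theorem~B, paralleling the remark after Proposition~\ref{dmn:prop1}, though this converse is not strictly required by the statement as written.
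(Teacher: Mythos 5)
Your proposal is correct and follows essentially the same route as the paper: read off $\theta_y=0$ from the form of the shape operator \eqref{dmn:p2_2} (the detour through Theorem~\ref{dmn:thm2} is harmless but unnecessary, since that theorem's proof itself rests on the same observation), then renormalize the $x$-coordinate using that $\theta$ depends only on $x$, and rerun the Proposition~\ref{dmn:prop1} computation with $\a=1$ and $\theta_y=0$ to obtain \eqref{dmn:p4_2} and, via Codazzi, the ODE \eqref{dmn:p4_3}. Your caution about rederiving the PDE in the new coordinate rather than transforming \eqref{dmn:p2_3} term by term is exactly what the paper does (though the term-by-term transformation does also work out).
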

\begin{proof}
Looking back at the proof of Proposition~\ref{dmn:prop1}, if $T$ is a
principal direction, then one can choose local coordinates $(x,y)$
such that
\linebreak
$g=\a^2(x,y)dx^2+\b^2(x,y)dy^2$ and $\theta_y=0$, i.e.
the angle function $\theta(x, y)$ becomes function only of $x$.
This means that we can do a change of the $x$-coordinate such that
the metric is given now by \eqref{dmn:p4_1}. Moreover, following the
same line of proof as in Proposition~\ref{dmn:prop1}
 for $\alpha=1$ and $\theta_y=0$, we obtain that the shape operator is given by \eqref{dmn:p4_2}.
Finally, the equation of Codazzi yields \eqref{dmn:p4_3}.
\end{proof}
\begin{remark} \rm
For every two functions $\theta$ and $\beta$ defined on a smooth simply connected surface $M$ such that
$\beta_{xx}+ \tan\theta\theta_x\beta_{x} -\beta\cos^2\theta=0$ and $\theta_y=0$
for certain coordinates
$(x,y)$, we can construct an isometric immersion $F:M\longrightarrow {\mathbb{H}}^2\times{\mathbb{R}}$
with the shape operator \eqref{dmn:p4_2} and such that it has a canonical principal direction.
\end{remark}
\begin{remark}\label{dmn:remark4}\rm
 Let $M$ be an isometrically immersed surface in $\H^2\times\R$ such that
$T$ is a principal direction. Coordinates $(x,y)$ on $M$ such that the hypotheses of Proposition~\ref{dmn:prop3} are fulfilled,
i.e. $\partial_x$ is collinear with $T$ and the metric $g$ has the form $g=dx^2+\b^2(x,y)dy^2$, will
be called {\em canonical coordinates}. Of course, they are not unique. More precisely,
if $(x,y)$ and $(\overline{x},\overline{y})$ are both canonical coordinates, then
they are related by $\overline{x}=\pm\ x+c$ and $\overline{y}=\overline{y}(y)$, where $c\in\R$.
\end{remark}

We are interested in solving equation \eqref{dmn:p4_3} in order to find explicit parametrizations of
surfaces $M$ in $\H^2\times\R$ with $T$ as principal direction.

We give first some additional results.

\begin{lemma} 
\label{dmn:lemma1}
Let us consider the following PDE {\rm (}in $\f${\rm)}
\begin{equation*}
\frac{\f_x^2}{\cos^2\theta(x)}-\f^2=\tilde{\mu}(y)
\end{equation*}
 with $\f$ a function of two variables $x$ and $y$, $\theta$ a function of one variable $x$ and
$\tilde{\mu}$ a function of $y$ that does not change sign. The set of solutions is
\begin{eqnarray}
\label{dmn:l1_3}
\f = \mu(y)\sinh(\phi(x)+\psi(y)) &  {\rm when } & \tilde{\mu}(y)=\mu^2(y) \\
\label{dmn:l1_4}
\f = \mu(y)\cosh(\phi(x)+\psi(y)) & {\rm when } & \tilde{\mu}(y)=-\mu^2(y)\\
\label{dmn:l1_2}
\f = \psi(y)e^{\pm\phi(x)} \hspace{20mm} & {\rm when } & \tilde{\mu}(y)=0
\end{eqnarray}
where $\psi$ is an integration function depending only on $y$, while $\phi$ depends only on $x$
and denotes a primitive of $\cos\theta$.
\end{lemma}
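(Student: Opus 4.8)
The plan is to treat the PDE
$\displaystyle \frac{\f_x^2}{\cos^2\theta(x)}-\f^2=\tilde{\mu}(y)$
as a first-order ODE in $x$ for each fixed $y$, and then to keep track of how the integration ``constants'' depend on $y$. First I would introduce the primitive $\phi(x)$ of $\cos\theta(x)$ promised in the statement, so that $\phi_x=\cos\theta$, and rewrite the equation as $\f_x^2=\cos^2\theta\,(\f^2+\tilde{\mu}(y))$. Since $\tilde{\mu}$ does not change sign, I split into the three cases according to whether $\tilde{\mu}=\mu^2$, $\tilde{\mu}=-\mu^2$, or $\tilde{\mu}=0$, where $\mu=\mu(y)\ge 0$ is chosen appropriately in each case.

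In the case $\tilde{\mu}(y)=\mu^2(y)$ the equation becomes $\f_x=\pm\cos\theta\sqrt{\f^2+\mu^2}$; separating variables gives
$\displaystyle \frac{\f_x}{\sqrt{\f^2+\mu^2}}=\pm\cos\theta=\pm\phi_x$,
and integrating in $x$ (with $\mu$, hence an additive term, depending on $y$) yields
$\operatorname{arcsinh}(\f/\mu)=\pm\phi(x)+\psi(y)$ for some function $\psi$ of $y$ alone, i.e. $\f=\mu(y)\sinh(\pm\phi(x)+\psi(y))$; absorbing the sign into $\phi$ and $\psi$ gives \eqref{dmn:l1_3}. The case $\tilde{\mu}(y)=-\mu^2(y)$ is entirely parallel: now $\f_x=\pm\cos\theta\sqrt{\f^2-\mu^2}$, so $\f^2\ge\mu^2$, and $\operatorname{arccosh}(|\f|/\mu)=\pm\phi(x)+\psi(y)$ produces $\f=\mu(y)\cosh(\phi(x)+\psi(y))$, which is \eqref{dmn:l1_4}. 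Finally, when $\tilde{\mu}(y)\equiv 0$ the equation is $\f_x=\pm\cos\theta\,\f=\pm\phi_x\,\f$, a linear ODE in $x$ whose general solution is $\f=\psi(y)e^{\pm\phi(x)}$, giving \eqref{dmn:l1_2}. Conversely, one checks by direct differentiation that each listed family indeed satisfies the PDE, so the three families exhaust the solution set.

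The only genuine subtlety — and the point I would be most careful about — is the regularity and sign bookkeeping when separating variables: one must argue that on the (open, dense) set where $\f^2+\tilde{\mu}(y)\neq 0$ the sign in $\f_x=\pm\phi_x\sqrt{\f^2+\tilde\mu}$ is locally constant, perform the integration there, and then invoke continuity of $\f$ to extend the resulting formula across the exceptional locus; this also shows that the ``constant'' of integration is a genuine function $\psi(y)$ of $y$ only and not something more general. Since $\tilde{\mu}$ is assumed not to change sign, the case split is globally consistent, and no mixing of the three families can occur on a connected domain.
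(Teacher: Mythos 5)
The paper itself states Lemma~\ref{dmn:lemma1} without proof, so there is no argument of the authors to compare yours against; what you have written is the standard separation-of-variables proof that the authors evidently considered routine, and it is essentially correct. Rewriting the equation as $\f_x=\pm\cos\theta\sqrt{\f^2+\tilde\mu(y)}=\pm\phi_x\sqrt{\f^2+\tilde\mu(y)}$, integrating in $x$ for fixed $y$ via $\operatorname{arcsinh}$, $\operatorname{arccosh}$, or the linear equation, and observing that the integration constant is a function of $y$ alone is exactly the intended route, and your remark that the converse is a direct differentiation closes the equivalence.

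Two small points of bookkeeping. First, in Case 1 you say you absorb the residual sign ``into $\phi$ and $\psi$''; you cannot alter $\phi$, which is a fixed primitive of $\cos\theta$, but since $\sinh$ is odd you can write $\mu\sinh(-\phi+\psi)=(-\mu)\sinh(\phi-\psi)$ and absorb the sign into $\mu$ and $\psi$ (which leaves $\tilde\mu=\mu^2$ unchanged); in Case 2 the evenness of $\cosh$ does the same job. Second, your continuity argument across the locus $\f^2+\tilde\mu(y)=0$ does not dispose of the genuinely singular solutions of Case 2: $\f(x,y)=\pm\mu(y)$, constant in $x$, satisfies the first-order PDE identically but is not of the form \eqref{dmn:l1_4} unless $\cos\theta\equiv0$. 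So, read literally, neither the lemma nor your proof accounts for this family. In the paper's application this is harmless, because there $\f=\beta$ also satisfies the second-order equation \eqref{dmn:p4_3}, and an $x$-independent $\beta$ forces $\beta\cos^2\theta=0$, which is excluded by $\beta>0$ and $\theta\neq\frac{\pi}{2}$; but it is worth stating that you are discarding the singular branch for this reason rather than by the continuity extension you describe.
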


\begin{lemma}
\label{dmn:lemma2} 
The ODE {\rm(}in $\f${\rm)}
\begin{equation*}
\f''+\tan\theta~\theta'\f'-\cos^2\theta\f=0
\end{equation*}
has the solution
\begin{equation*}
\f=c_1\sinh\phi(x)+c_2\cosh\phi(x)
\end{equation*}
where $\f$ and $\theta$ are one variable functions of $x$, $\phi$ is a primitive of $\cos\theta$ and $c_1,\ c_2$  are real constants.
\end{lemma}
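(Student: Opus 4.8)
The plan is to recognize the ODE $\f'' + \tan\theta\,\theta'\,\f' - \cos^2\theta\,\f = 0$ as a constant-coefficient equation in disguise, the disguise being the substitution of independent variable $\phi = \phi(x)$ with $\phi' = \cos\theta$. First I would compute the chain-rule expressions: writing $\f$ as a function of $\phi$, one has $\f_x = \f_\phi\,\phi' = \f_\phi\cos\theta$ and $\f_{xx} = \f_{\phi\phi}\cos^2\theta + \f_\phi(\cos\theta)' = \f_{\phi\phi}\cos^2\theta - \f_\phi\sin\theta\,\theta'$. Substituting these into the ODE, the term $-\f_\phi\sin\theta\,\theta'$ coming from $\f_{xx}$ cancels exactly against $\tan\theta\,\theta'\,\f_x = \tan\theta\,\theta'\cos\theta\,\f_\phi = \sin\theta\,\theta'\,\f_\phi$, and after dividing by $\cos^2\theta$ (which is nonzero since $\theta \neq \frac{\pi}{2}$, consistent with the running hypothesis) we are left with $\f_{\phi\phi} - \f = 0$.

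The remaining step is routine: the general solution of $\f_{\phi\phi} - \f = 0$ is $\f = c_1\sinh\phi + c_2\cosh\phi$ with $c_1, c_2 \in \R$, and reverting to the variable $x$ via $\phi = \phi(x)$ gives precisely the claimed form $\f = c_1\sinh\phi(x) + c_2\cosh\phi(x)$. Conversely one checks directly that any such $\f$ solves the original ODE, so the solution set is exactly as stated.

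I do not anticipate a genuine obstacle here; the only point requiring a word of care is the cancellation of the first-order terms, which is the whole reason the substitution works and should be displayed explicitly. One should also note that the change of variable $x \mapsto \phi(x)$ is a legitimate local diffeomorphism wherever $\cos\theta \neq 0$, so the correspondence between solutions is genuine and not merely formal. (This lemma, together with Lemma~\ref{dmn:lemma1}, is what will later be fed into equation~\eqref{dmn:p4_3} to produce the explicit parametrizations announced in the introduction.)
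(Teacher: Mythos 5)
Your proof is correct, and the computation is exactly the intended one: the paper in fact states Lemma~\ref{dmn:lemma2} without proof, and the change of independent variable $x\mapsto\phi(x)$ with $\phi'=\cos\theta$, which turns the equation into $\f_{\phi\phi}-\f=0$ after the first-order terms cancel, is precisely the mechanism the authors rely on (note that $\cos\theta\neq 0$ is guaranteed by the standing assumption $\theta\neq\frac{\pi}{2}$). You even prove slightly more than the bare statement, namely that $c_1\sinh\phi+c_2\cosh\phi$ is the \emph{general} solution, which is what is actually needed when the lemma is invoked in the proof of Theorem~\ref{dmn:thm3}.
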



We can now state the following theorem.

\begin{theorem} 
\label{dmn:thm3}
If $F:M\rightarrow\H^2\times\R$ is an isometric immersion with $\theta\neq0, \frac\pi 2$, then $T$
is a principal direction if and only if $F$ is given,
up to isometries of $\H^2\times\R$, by
$$
F(x,\ y)=(F_1(x,\ y),\ F_2(x,\ y),\ F_3(x,\ y),\ F_4(x))
$$
with  $F_j(x,y)=A_j(y)\sinh\phi(x)+B_j(y)\cosh\phi(x)$, for $j=1,2,3$ and
$F_4(x)=\displaystyle\int_0^x\sin\theta(\tau)d\tau$, where $\phi'(x)=\cos\theta$.
The six functions $A_j$ and $B_j$ are found in one of the following cases
\begin{itemize}
\item {\bf Case ${\mathbf 1.}$} 
\begin{eqnarray*}
A_j(y)=\int_0^y H_j(\tau)\cosh\psi(\tau)d\tau +c_{1j}\hspace{9mm} \\
B_j(y)=\int_0^y H_j(\tau)\sinh\psi(\tau)d\tau +c_{2j}\hspace{10mm} \\
H_j'(y)=B_j(y)\sinh\psi(y)-A_j(y)\cosh\psi(y)
\end{eqnarray*}
\item {\bf Case ${\mathbf 2.}$} 
\begin{eqnarray*}
A_j(y)=\int_0^y H_j(\tau)\sinh\psi(\tau)d\tau +c_{1j}\hspace{12mm}\\
B_j(y)=\int_0^y H_j(\tau)\cosh\psi(\tau)d\tau +c_{2j}\hspace{12mm}\\
H_j'(y)=-A_j(y)\sinh\psi(y)+B_j(y)\cosh\psi(y)
\end{eqnarray*}
\item {\bf Case ${\mathbf 3.}$} 
\begin{eqnarray*}
A_j(y)=\pm \int_0^y H_j(\tau)d\tau +c_{1j}\hspace{28mm}\\
B_j(y)=\int_0^y H_j(\tau)d\tau +c_{2j}\hspace{32mm}\\
H_j'(y)=c_{2j}\mp c_{1j}\hspace{15mm}\hspace{32mm}
\end{eqnarray*}
\end{itemize}
where $H=(H_1,H_2,H_3)$ is a curve on the de Sitter space $\S_1^2$,
$\psi$ is a smooth function on $M$ and $c_1=(c_{11},c_{12},c_{13})$,
$c_2=(c_{21},c_{22},c_{23})$ are constant vectors.
\end{theorem}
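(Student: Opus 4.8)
The plan is to start from the canonical coordinates of Proposition~\ref{dmn:prop3}, so that $T$ a principal direction is equivalent to $\theta_y=0$, the metric is $g=dx^2+\b^2(x,y)dy^2$, the shape operator is \eqref{dmn:p4_2}, and $\b$ solves the ODE-in-$x$ \eqref{dmn:p4_3}. By Lemma~\ref{dmn:lemma2} (applied with the $y$-parameter as a spectator), the general solution is $\b(x,y)=c_1(y)\sinh\phi(x)+c_2(y)\cosh\phi(x)$ with $\phi'=\cos\theta$; but more conveniently one can write $\b(x,y)=\mu(y)\sinh(\phi(x)+\psi(y))$, $\b=\mu(y)\cosh(\phi(x)+\psi(y))$, or $\b=\psi(y)e^{\pm\phi(x)}$ according to the sign of $c_1^2-c_2^2$, which matches the three cases of the theorem and explains why Lemma~\ref{dmn:lemma1} is invoked (with $\f=\b$, $\tilde\mu=(\b_x/\cos\theta)^2-\b^2$, a first integral of \eqref{dmn:p4_3}).

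Next I would write out the immersion $F=(F_1,F_2,F_3,F_4):M\to\R_1^3\times\R$ with $\tilde\xi=(F_1,F_2,F_3,0)$ the unit normal to $\H^2\times\R$ and $\xi=(\xi_1,\xi_2,\xi_3,\cos\theta)$ the unit normal to $M$ inside $\H^2\times\R$, as set up before Theorem~\ref{dmn:thm2}. Since $\partial_t=T+\cos\theta\,\xi$ and $T=\partial_x$ (because $\|T\|^2=\sin^2\theta$ and $g_{xx}=1$ here), the fourth component satisfies $F_4{}_x=\sin^2\theta+\cos^2\theta\cdot(\text{stuff})$; more directly, $\langle F_x,\partial_t\rangle = \langle T,\partial_t\rangle = \sin^2\theta$ and $\langle F_y,\partial_t\rangle=0$, giving $F_4=\int_0^x\sin\theta\,d\tau$ (up to the expected $\pm$ and additive constant absorbed into an ambient isometry), with the first three components tangent to $\H^2$ in the $\R_1^3$ factor. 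I would then translate the three geometric conditions --- $\langle F,F\rangle=-1$ (landing in $\H^2$), the induced metric equals $dx^2+\b^2dy^2$, and the shape operator equals \eqref{dmn:p4_2} --- into a system for $\hat F=(F_1,F_2,F_3)$. Using $\widetilde\nabla_{\partial_x}\partial_x = \hat F_{xx}$ in $\R_1^3$ and the Gauss formula, the $xx$-equation of the second fundamental form together with $\langle\hat F,\hat F\rangle=-1$ and $\langle \hat F_x,\hat F_x\rangle=1$ forces $\hat F_{xx}=\cos^2\theta\,\hat F$ along each $y$-line (this is exactly the ODE of Lemma~\ref{dmn:lemma2} again, componentwise), so $\hat F(x,y)=A(y)\sinh\phi(x)+B(y)\cosh\phi(x)$ with $\phi'=\cos\theta$, which is the claimed form $F_j=A_j\sinh\phi+B_j\cosh\phi$.

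With this ansatz in hand, the remaining content is to pin down $A=(A_1,A_2,A_3)$ and $B=(B_1,B_2,B_3)$. Substituting into $\langle\hat F,\hat F\rangle=-1$ for all $x$ and using $\cosh^2-\sinh^2=1$ and independence of the functions $1,\sinh\phi\cosh\phi,\sinh^2\phi$ gives $\langle A,A\rangle=-\langle B,B\rangle$, $\langle A,B\rangle=0$, and $\langle B,B\rangle=-1$; so $B(y)\in\H^2$, $A(y)\in\S_1^2$ (spacelike, norm $+1$), and $A\perp B$. Then the $xy$- and $yy$-components of "induced metric $=dx^2+\b^2dy^2$" and of the shape operator \eqref{dmn:p4_2} give first-order ODEs in $y$ relating $A',B'$ to $\b,\psi$: the off-diagonal vanishing of $A$ forces $A'$ and $B'$ to be parallel (the ``two speeds parallel'' condition of Theorem~\ref{dmn:thm4}), and writing $A'=H\cosh\psi$, $B'=H\sinh\psi$ (resp.\ the hyperbolic-swapped or exponential variants) with $H$ a curve, the compatibility of these equations yields the displayed formulas for $A_j,B_j,H_j$ in Cases~1--3, one case per sign of $\tilde\mu$; differentiating $\langle A,B\rangle=0$, $\langle B,B\rangle=-1$ shows $H=(H_1,H_2,H_3)$ lands on $\S_1^2$. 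Conversely, any data of this form plugged back in satisfies the compatibility equations, so by Theorem~B (or the Remark after Proposition~\ref{dmn:prop3}) it defines such an immersion; uniqueness up to ambient isometry is Theorem~B together with Remark~\ref{dmn:remark4} on the freedom $\bar x=\pm x+c$, $\bar y=\bar y(y)$ in canonical coordinates.

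I expect the main obstacle to be the bookkeeping in the third step: extracting the precise coupled ODEs for $A,B,H$ from the metric and shape-operator conditions and correctly splitting into the three cases according to the sign of the first integral $\tilde\mu(y)$, while tracking the reparametrization freedom in $y$ and the $\pm$ ambiguities so that the final normalization matches the stated formulas exactly. The geometric identifications ($B\in\H^2$, $A,H\in\S_1^2$, $A\perp B$, $A'\parallel B'$) fall out of polarizing $\langle\hat F,\hat F\rangle=-1$ and differentiating, which is routine; the genuinely delicate part is verifying that the Codazzi-type equation \eqref{dmn:p4_3}/the shape operator condition is not just necessary but, combined with Lemmas~\ref{dmn:lemma1} and~\ref{dmn:lemma2}, exactly equivalent to the three explicit families, and that nothing is lost when passing between the $(c_1(y),c_2(y))$ form and the $(\mu(y),\psi(y))$ form of $\b$.
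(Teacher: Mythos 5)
Your plan is essentially the paper's own proof: canonical coordinates from Proposition~\ref{dmn:prop3}, the first integral $\beta_x^2/\cos^2\theta-\beta^2=\tilde k(y)$ of \eqref{dmn:p4_3} resolved by Lemma~\ref{dmn:lemma1} into the three cases, the Gauss-formula PDEs for $(F_1,F_2,F_3)$ with Lemma~\ref{dmn:lemma2} supplying the $x$-dependence, and the Lorentzian identities $\< A,A\>=1$, $\< B,B\>=-1$, $\< A,B\>=0$, $\< H,H\>=1$ obtained by polarizing the constraints (the paper integrates the $xy$-equation first and fixes the $x$-part afterwards, but that is only a reordering of the same computation). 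Before writing it up, correct three slips: in canonical coordinates $T=\sin\theta\,\partial_x$ rather than $\partial_x$, so $\widetilde g(F_x,\partial_t)=\sin\theta$ (not $\sin^2\theta$), which is what actually yields $F_4=\int_0^x\sin\theta$; likewise $\<\hat F_x,\hat F_x\>=\cos^2\theta$, not $1$, since $(F_4)_x^2=\sin^2\theta$; and the $xx$-equation is $\hat F_{xx}=\cos^2\theta\,\hat F-\tan\theta\,\theta_x\hat F_x$ --- the first-order term you dropped is exactly what makes Lemma~\ref{dmn:lemma2} applicable for non-constant $\theta$, since $\phi''\neq 0$ there.
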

\begin{proof}
We choose canonical coordinates $(x,y)$ as in Proposition~\ref{dmn:prop3}.

First, let us determine the $4^{{\rm th}}$ component of $F$.
Taking the derivatives w.r.t. $x$ and respectively w.r.t. $y$, one has
$\displaystyle (F_4)_x=\widetilde g(F_x,\dt)=\sin\theta$, $(F_4)_y=0$. It
follows that $F_4$ depends only on $x$ and it can be expressed,
after a translation of $M$ along the $t$-axis, as $F_4(x)=\displaystyle\int_0^x\sin\theta(\tau)d\tau$.

We point our attention now on the first three components. We are able to write the Levi-Civita connection of the metric
given by \eqref{dmn:p4_1}
\begin{eqnarray}
\label{dmn:tt4_1}
&&\n_{\dx} \dx =0\\[2mm]
\label{dmn:tt4_2}
&&\n_{\dx} \dy = \n_{\dy} \dx = \frac{\b_x}{\b}\dy\\[2mm]
\label{dmn:tt4_3}
&&\n_{\dy} \dy =-\b\b_x\dx + \frac{\b_y}{\b}\dy.
\end{eqnarray}

Recall that $M$ is a codimension $2$ surface in $\R_1^4$. Computing explicitly the two normals, we have
\begin{equation}
\label{dmn:tt4_4} \xi=(\xi_1,\ \xi_2,\ \xi_3,\ \cos\theta),\ {\rm where}\
\xi_{j}=-\tan\theta(F_j)_x,\ j=\overline{1,3}
\end{equation}
\begin{equation}
\label{dmn:tt4_5} \tilde{\xi}=(F_1,\ F_2,\ F_3,\ 0)
\end{equation}
with corresponding shape operators $A$ given by \eqref{dmn:p4_2} and
\begin{equation}
\label{dmn:tt4_6} \widetilde{A} = \left( \begin{array}{ccc}
-\cos^2\theta & 0 \\
0&-1
\end{array} \right).
\end{equation}
Cf. also \cite{dmn:DM09}.

Using the Gauss formula {\bf (G)} in combination with \eqref{dmn:tt4_1}-\eqref{dmn:tt4_5} one gets
\begin{equation}
\label{dmn:tt4_7} (F_j)_{xx}=\cos^2\theta F_j
-\tan\theta\theta_x(F_j)_x,\  j=\overline{1,3} \hspace{18mm}
\end{equation}
\begin{equation}
\label{dmn:tt4_8} (F_j)_{xy}=\frac{\b_x}{\b}(F_j)_y,\  j=\overline{1,3} \hspace{43mm}
\end{equation}
\begin{equation}
\label{dmn:tt4_9}
(F_j)_{yy}=\b^2F_j-\frac{1}{\cos^2\theta}\b\b_x(F_j)_x+\frac{\b_y}{\b}(F_j)_y,\  j=\overline{1,3}.
\end{equation}
We remark that \eqref{dmn:tt4_7} does not depend on $\b$.

Our aim is to determine the function $\b$. A first integration in \eqref{dmn:p4_3} leads to
$\displaystyle \frac{\b_x^2}{\cos^2\theta}-\b^2=\tilde{k}(y)
$
where $\tilde k$ is an arbitrary function on $M$ depending only on $y$.
Solving this PDE according to Lemma~$\ref{dmn:lemma1}$ we distinguish the
following cases:

\smallskip

{\bf Case ${\mathbf 1.}$} If $\tilde{k}(y)=\mu^2(y)$, the solution has
the form \eqref{dmn:l1_3}.

Changing the $y$-coordinate, we consider $\b=\sinh(\phi(x)+\psi(y))$,
with $\phi'(x)=\cos\theta$ and hence the metric
has the form
\begin{equation}
\label{dmn:tt4_17}
g=dx^2+\sinh^2(\phi(x)+\psi(y))dy^2.
\end{equation}

From \eqref{dmn:tt4_8} and \eqref{dmn:tt4_9} and taking into account
\eqref{dmn:tt4_17} we get
\begin{equation}
\label{dmn:tt4_18}
(F_j)_{xy}=\coth(\phi(x)+\psi(y))\cos\theta(F_j)_y
\end{equation}
respectively
\begin{equation}
\label{dmn:tt4_19}
(F_j)_{yy} = \sinh^2(\phi(x)+\psi(y))F_j-\frac{1}{2\cos\theta}\sinh2(\phi(x)+\psi(y))(F_j)_x +
\end{equation}
$$
     + \coth(\phi(x)+\psi(y))\psi'(y)(F_j)_y.\qquad
$$
After two consecutive integrations in \eqref{dmn:tt4_18}  w.r.t. $x$ and w.r.t. $y$
we have
$$
F_j(x,\ y)=\int\limits_0\limits^y
H_j(\tau)\sinh(\phi(x)+\psi(\tau))d\tau+I_j(x).
$$
In addition, since $F_j$ fulfills \eqref{dmn:tt4_7}, each $I_j$
satisfies the equation
\linebreak
$
I_j''+\tan\theta\theta_x I_j'-\cos^2\theta I_j=0
$
which, by Lemma~$\ref{dmn:lemma2}$, has the general solution
$I_j=c_{1j}\sinh\phi(x)+c_{2j}\cosh\phi(x)$. Hence we find
\begin{equation}
\label{dmn:tt4_2_20}
F_j(x,\ y)=\int\limits_0\limits^y H_j(\tau)\sinh(\phi(x)+\psi(\tau))d\tau+\qquad
\end{equation}
$$
\qquad  + c_{1j}\sinh\phi(x)+c_{2j}\cosh\phi(x).
$$
We still have to use the condition \eqref{dmn:tt4_19}. This yields
\begin{equation}
\begin{array}{l}
\label{dmn:tt4_2_21}
H_j'(y)=-\cosh(\phi(x)+\psi(y))\displaystyle\int\limits_0^y H_j(\tau)\cosh(\phi(x)+\psi(\tau))d\tau+\\
\qquad\qquad + \sinh(\phi(x)+\psi(y))\displaystyle\int\limits_0^y H_j(\tau)\sinh(\phi(x)+\psi(\tau))d\tau+\\
\qquad\qquad + c_{2j}\sinh\psi(y)-c_{1j}\cosh\psi(y).
\end{array}
\end{equation}

We remark that apparently the right hand in \eqref{dmn:tt4_2_21} depends on both $x$ and $y$,
while the left one depends only on $y$.
Further-on we will see that everything depends only of $y$.
If we define
\begin{equation}
\label{dmn:tt4_2_22}
\begin{array}{l}
A_j(y)=\displaystyle\int\limits_0^y H_j(\tau)\cosh\psi(\tau)d\tau + c_{1j}\\
B_j(y)=\displaystyle\int\limits_0^y H_j(\tau)\sinh\psi(\tau)d\tau + c_{2j}
\end{array}
\end{equation}
then we can rewrite \eqref{dmn:tt4_2_20} and  \eqref{dmn:tt4_2_21} in a simpler form
\begin{equation}
\label{dmn:tt4_2_24} F_j(x,\ y)=A_j(y)\sinh\phi(x)+B_j(y)\cosh\phi(x)
\end{equation}
respectively
\begin{equation*}
\begin{array}{rcl}
\label{dmn:tt4_2_25} H_j'(y)=B_j(y)\sinh\psi(y)-A_j(y)\cosh\psi(y).
\end{array}
\end{equation*}
We will keep in mind that $A_j$ and $B_j$ for $j=\overline{1,3}$ depend on $y$ but, for simplicity,
we drop the "$y$" in writing.

Let us consider $\epsilon_1=\epsilon_2=1$ and $\epsilon_3=-1$. Then we have
\begin{equation}
\label{dmn:tt4_2_26}
\hspace{-30mm}\sum\limits_{j=1}\limits^3\epsilon_jF_j^2=-1
\end{equation}\vspace{-4mm}
\begin{equation}
\label{dmn:tt4_2_27}
\hspace{-15mm}\sum\limits_{j=1}\limits^3\epsilon_j{(F_j)}_x^2=\cos^2\theta(x)
\end{equation}\vspace{-4mm}
\begin{equation}
\label{dmn:tt4_2_28}
\hspace{-20mm}\sum\limits_{j=1}\limits^3\epsilon_j{(F_j)}_x{(F_j)}_y=0
\end{equation}\vspace{-4mm}
\begin{equation}
\label{dmn:tt4_2_29}
\sum\limits_{j=1}\limits^3\epsilon_j{(F_j)}_y^2=\sinh^2(\phi(x)+\psi(y)).
\end{equation}
Combining in a  proper manner \eqref{dmn:tt4_2_24} with \eqref{dmn:tt4_2_26}--\eqref{dmn:tt4_2_28},
denoting
\linebreak
$A=(A_1,\ A_2,\ A_3)$ and $B=(B_1,\ B_2,\ B_3)$ we get the following relations
written in terms of the Lorentzian scalar product
\begin{equation}
\label{dmn:tt4_2_35}
\<A,\ A\>=1,  \<B,\ B\>=-1,  \<A,\ B\>=0,  \<A',\ B\>=\<A,\ B'\>=0. 
\end{equation}
As consequence one has
$$
\<A,\ A'\>=0,\ \<B,\ B'\>=0,\ \<A,\ H\>=0,\ \<B,\ H\>=0.
$$
Finally, developing \eqref{dmn:tt4_2_29} one obtains
\begin{equation*}
\label{dmn:tt4_2_36}
\<H,\ H\>=\<A',\ A'\>-\<B',\ B'\>=1.
\end{equation*}
Moreover $\<H',\ H'\>=1$.
We conclude that $H$ is a unit speed spacelike curve on the Lorentzian unit sphere, known as the de Sitter space $\S_1^2$.

\medskip

{\bf Case ${\mathbf 2.}$} If $\tilde{k}(y)=-\mu^2(y)$, the solution
has the form \eqref{dmn:l1_4}.

Again, changing the $y$-coordinate we consider $\b=\cosh(\phi(x)+\psi(y))$,
where $\phi'(x)=\cos\theta$ and the metric in this case
is given by
\begin{equation*}
 g=dx^2+\cosh^2(\phi(x)+\psi(y))dy^2.
\end{equation*}
In a similar way as in {\bf Case $\mathbf{1}$}, by straightforward computations one gets
\begin{equation*}
\label{dmn:tt4_2_38}
F_j(x,\ y)=\int\limits_0\limits^y H_j(\tau)\cosh(\phi(x)+\psi(\tau))d\tau+c_{1j}\sinh\phi(x)+c_{2j}\cosh\phi(x)
\end{equation*}
with
\begin{equation*}
\label{dmn:tt4_2_39}
\begin{array}{rcl}
H_j'(y)&=&-\sinh(\phi(x)+\psi(y))\displaystyle\int\limits_0^y H_j(\tau)\sinh(\phi(x)+\psi(\tau))d\tau+\\
&&\cosh(\phi(x)+\psi(y))\displaystyle\int\limits_0^y H_j(\tau)\cosh(\phi(x)+\psi(\tau))d\tau+\\
&&c_{2j}\cosh\psi(y)-c_{1j}\sinh\psi(y).
\end{array}
\end{equation*}
As in the previous case, we define the following quantities
\begin{equation*}
\label{dmn:tt4_2_40}
\begin{array}{l}
A_j(y)=\displaystyle\int\limits_0\limits^y H_j(\tau)\sinh\psi(\tau)d\tau + c_{1j}\quad {\rm and}\\[2mm]
B_j(y)=\displaystyle\int\limits_0\limits^y H_j(\tau)\cosh\psi(\tau)d\tau + c_{2j}.
\end{array}
\end{equation*}
Then we have
\begin{equation*}
\label{dmn:tt4_2_42}
F_j(x,\ y)=A_j(y)\sinh\phi(x)+B_j(y)\cosh\phi(x)
\end{equation*}
\begin{equation*}
\begin{array}{rcl}
\label{dmn:tt4_2_43}
H_j'(y)=-A_j(y)\sinh\psi(y)+B_j(y)\cosh\psi(y).
\end{array}
\end{equation*}
Formulas \eqref{dmn:tt4_2_26}-\eqref{dmn:tt4_2_28} together with
\begin{equation*}
\label{dmn:tt4_2_44}
\sum\limits_{j=1}\limits^3\epsilon_j{(F_j)}_y^2=\cosh^2(\phi(x)+\psi(y))
\end{equation*}
imply that the expressions \eqref{dmn:tt4_2_35} again hold, and that
\begin{equation*}
\label{dmn:tt4_2_45}
\<H,\ H\>=-\<A',\ A'\>+\<B',\ B'\>=1.
\end{equation*}
In this case we find $\<H',\ H'\>=-1$, hence $H$ is an unit speed timelike curve in the de Sitter space $\S_1^2$.

\medskip

{\bf Case ${\mathbf 3.}$} $\tilde{k}(y)=0$. The solution is given by \eqref{dmn:l1_2}.

After a change of the $y$-coordinate, locally  we have $\b=e^{\pm\phi(x)}$,
where $\phi'(x)=\cos\theta$, and the metric is given by
\begin{equation*}
\label{dmn:tt4_3_11} 
g=dx^2+e^{\pm 2\phi(x)}dy^2.
\end{equation*}
By straightforward computations we find in this case
\begin{equation*}
\label{dmn:tt4_3_14}
 F_j(x,y)=A_j(y)\sinh\phi(x)+B_j(y)\cosh\phi(x)
\end{equation*}
where we denoted
\begin{equation}
\label{dmn:tt4_3_15}
A_j=\pm \int\limits_0\limits^y H_j(\tau)d\tau+c_{1j}\quad
{\rm and}\quad  B_j=\int\limits_0\limits^y H_j(\tau)d\tau+c_{2j}
\end{equation}
with
\begin{equation}
\label{dmn:tt4_3_16}
H_j'(y)=c_{2j}\mp c_{1j}.
\end{equation}
Applying the same technique as in previous cases, we get
\begin{eqnarray}
\label{dmn:tt4_3_17}
\<A,\ A\>=1,\ \<B,\ B\>=-1,\
\<A,\ B\>=0,\ \<A',\ B\>=0,\ \<A,\ B'\>=0.
\nonumber
\end{eqnarray}
Moreover $\<A,\ H\>=0,\ \<B,\ H\>=0$, since $A'=\pm H$, $B'=H$.

Remark that $H$ is unitary, i.e. $\<H,\ H\>=1$, with $H'\neq0$.

Denoting by $c_1=(c_{11},\ c_{12},\ c_{13})$, $c_2=(c_{21},\ c_{22},\ c_{23})$,
from \eqref{dmn:tt4_3_16} we compute
$
H=(c_2\mp c_1)y+c_3$ where $c_3=(c_{31}, c_{32}, c_{33})$ is a constant vector.
Plugging this value in \eqref{dmn:tt4_3_15} we get
$$
A=\frac{\pm c_2-c_1}{2}y^2\pm c_3y+c_1\qquad {\rm and }\qquad
B=\frac{c_2\mp c_1}{2}y^2+c_3y+c_2.
$$
We conclude with the following relations satisfied by $c_1$, $c_2$ and $c_3$
\begin{eqnarray}
\label{dmn:tt4_3_19}
\<c_1,\ c_1\>=1,\ \<c_2,\ c_2\>=-1,\ \<c_3,\ c_3\>=1,\\ \nonumber
\<c_1,\ c_2\>=0,\ \<c_1,\ c_3\>=0,\ \<c_2,\ c_3\>=0.\hspace{2mm}
\end{eqnarray}
From \eqref{dmn:tt4_3_16} and \eqref{dmn:tt4_3_19} it follows that $\<H',\ H'\>=0$.
Hence, in this case, $H$ is a lightlike curve in the
de Sitter space $\S_1^2$.

{\bf Conversely}, in each of the three cases we
prove that the corresponding surface has $T$ as principal direction.
Motivated by the fact that the idea of the proof is the same in
all cases, we sketch the proof only in the first of them. \\
We prove that the surface parametrized by
$$
\displaystyle F(x,y)=\left(F_1(x,y),\ F_2(x,y),\ F_3(x,y),\ \int_{0}^{x}\sin\theta(\tau)d\tau\right)
$$
where $F_j(x,y)=A_j(y)\sinh\phi(x)+B_j(y)\cosh\phi(x)$ with $A_j$ and $B_j$ given by
\eqref{dmn:tt4_2_22} has $T$ as principal direction.

Since
$\widetilde g(F_x, F_x)=1$, $\widetilde g(F_x, F_y)=0$, $\widetilde g(F_y, F_y)=\sinh^2(\phi(x)+\psi(y))$,
it follows that the metric $g$ can be written in form \eqref{dmn:p4_1}.
Computing the shape operator (e.g. from \eqref{dmn:pp2_5} and \eqref{dmn:pp2_6})
and using its symmetry we get $\theta_y=0$ and \eqref{dmn:p4_2}.
It is easy to prove that $\widetilde g(F_x, T)=\sin\theta$ and $\widetilde g(F_y,T)=0$
concluding that $T$ is a principal direction of the surface $M$ parametrized by $F$.

At this moment, the theorem is completely proved.
\end{proof}
\begin{remark}
\rm
Note that in order to reach an unified description of surfaces for which $T$ is a principal direction, we obtain
that the immersion $F:M\rightarrow\H^2\times\R$ is given by
$$
F(x,y)=\left(A(y)\sinh\phi(x)+B(y)\cosh\phi(x),\ \int_0^x\sin\theta(\tau)d\tau\right).
$$
In each case of the theorem of classification $A$ is a curve in $\S^2_1$ and
$B$ is a curve in $\H^2$ orthogonal to $A$ and such that the two speeds $A'$ and $B'$ are parallel.
Denoting by $H$ the unit vector of their common direction, one has $H=A\boxtimes B$ and moreover
\begin{itemize}
\item $H$ is a spacelike curve in the first case
\item $H$ is a timelike curve in the second case
\item $H$ is a lightlike curve in the last case.
\end{itemize}
\end{remark}


\begin{theorem}[\bf{Classification theorem}]
\label{dmn:thm4}
If $F:M\rightarrow \H^2\times\R$ is an isometric immersion with angle function $\theta\neq0, \frac\pi 2$, then $T$ is a principal direction
if and only if $F$ is given locally, up to isometries of the ambient space by
$$
F(x,y)=\left(A(y)\sinh\phi(x)+B(y)\cosh\phi(x), \chi(x)\right)
$$
where $A(y)$ is a curve in $\S_1^2$, $B(y)$ is a curve in $\H^2$, such that
\linebreak
$\langle A,B\rangle=0$, $A'|| B'$ and where $(\phi(x),\chi(x))$ is a regular curve in $\R^2$.
The angle function $\theta$ of $M$ depends only on $x$ and coincides with the angle function of the curve $(\phi,\chi)$.
In particular we can arc length reparametrize $(\phi,\chi)$; then $(x,y)$ are canonical coordinates and $\theta'(x)=\kappa(x)$, the curvature of $(\phi,\chi)$.
\end{theorem}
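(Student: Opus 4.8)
The plan is to deduce Theorem~\ref{dmn:thm4} from the more detailed Theorem~\ref{dmn:thm3} by packaging the three cases there into a single statement and translating the ambient-specific data into the intrinsic geometry of the planar profile curve $(\phi,\chi)$. First I would recall from the proof of Theorem~\ref{dmn:thm3} that, after choosing canonical coordinates $(x,y)$ as in Proposition~\ref{dmn:prop3}, the immersion necessarily has the form $F_j(x,y)=A_j(y)\sinh\phi(x)+B_j(y)\cosh\phi(x)$ for $j=1,2,3$ and $F_4(x)=\int_0^x\sin\theta(\tau)d\tau$, with $\phi'(x)=\cos\theta(x)$. Writing $A=(A_1,A_2,A_3)$, $B=(B_1,B_2,B_3)$ and $\chi(x)=F_4(x)$, I would invoke the relations \eqref{dmn:tt4_2_35} (and their analogues in Cases~2 and~3), which in every case give $\langle A,A\rangle=1$, $\langle B,B\rangle=-1$, $\langle A,B\rangle=0$, so $A$ is a curve in the de Sitter space $\S_1^2$ and $B$ a curve in $\H^2$ with $\langle A,B\rangle=0$; furthermore $\langle A',B\rangle=\langle A,B'\rangle=0$ together with $\langle A,A'\rangle=\langle B,B'\rangle=0$ forces both $A'$ and $B'$ to be orthogonal to the plane spanned by $A$ and $B$, hence collinear, i.e. $A'\parallel B'$. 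This is exactly the description in the statement, and the converse direction is already handled inside the proof of Theorem~\ref{dmn:thm3}.

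Next I would address the regularity of $(\phi,\chi)$: since $\phi'=\cos\theta$ and $\chi'=\sin\theta$, one has $\phi'^2+\chi'^2=1\neq 0$, so $(\phi,\chi)$ is a regular (indeed unit-speed in the $x$-parametrization) curve in $\R^2$; thus $x$ is already arc length for it, and there is nothing to reparametrize — or, stated intrinsically, after an arc length reparametrization the parameter is the canonical coordinate $x$ of Remark~\ref{dmn:remark4}. The claim that the angle function of $M$ depends only on $x$ is immediate from Proposition~\ref{dmn:prop3} ($\theta_y=0$). To identify $\theta$ with the angle function of the curve $(\phi,\chi)$: the tangent to this curve is $(\phi',\chi')=(\cos\theta,\sin\theta)$, which makes angle $\theta$ with the $\chi$-axis (the $\R$-factor); equivalently, $\dt=\partial_\chi$ decomposes along $M$ via \eqref{dmn:t}, and a direct check using $F_4=\chi$ gives $\widetilde g(F_x,\dt)=\sin\theta$, consistent with $T=\sin^2\theta\,\partial_x$, so the surface's angle function and the profile curve's angle function agree.

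For the last assertion, $\theta'(x)=\kappa(x)$: with $(\phi,\chi)$ unit-speed and tangent $(\cos\theta,\sin\theta)$, the signed curvature is by definition $\kappa=\phi'\chi''-\chi'\phi''$; differentiating the tangent gives $(\phi'',\chi'')=\theta'(-\sin\theta,\cos\theta)$, whence $\kappa=\cos\theta\cdot\theta'\cos\theta-\sin\theta\cdot(-\theta'\sin\theta)=\theta'$. I expect the main (very mild) obstacle to be bookkeeping rather than conceptual: checking that the three algebraic bundles of identities from the three cases of Theorem~\ref{dmn:thm3} collapse uniformly to "$A$ in $\S_1^2$, $B$ in $\H^2$, $\langle A,B\rangle=0$, $A'\parallel B'$" — in particular verifying that $A'$ and $B'$ cannot both vanish on an open set (else $F$ would degenerate), so that $\parallel$ is the right statement — and confirming that the isometry ambiguity of the ambient $\H^2\times\R$ (namely $O^+(2,1)\times$(translations in $\R$), modulo orientation) accounts precisely for the "up to isometries" clause and for the freedom in choosing the origin in the $x$- and $y$-coordinates as in Remark~\ref{dmn:remark4}.
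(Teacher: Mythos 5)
Your proposal is correct and follows essentially the same route as the paper: Theorem~\ref{dmn:thm4} is obtained by unifying the three cases of Theorem~\ref{dmn:thm3} through the relations \eqref{dmn:tt4_2_35} (giving $A\in\S_1^2$, $B\in\H^2$, $\langle A,B\rangle=0$, $A'\parallel B'$), which is precisely the content of the remark the paper places between the two theorems, together with the observation that $\phi'=\cos\theta$, $\chi'=\sin\theta$ makes $(\phi,\chi)$ unit speed with $\kappa=\theta'$. The only cosmetic slip is that the tangent $(\cos\theta,\sin\theta)$ makes angle $\theta$ with the $\phi$-axis (equivalently, the curve's \emph{normal} makes angle $\theta$ with the $\R$-direction, which is the correct analogue of the surface's angle function), but this does not affect the argument.
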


\begin{remark} \rm
\label{rmk:changeAB}
Since $\phi$ is determined up to constants ($\phi'(x)=\cos\theta$) we ask ourselves in which way a change
$\widetilde{\phi}(x)=\phi(x)-\phi_0$, for certain $\phi_0\in \R$, affects the parametrization of the surface.
If we take
$$
F(x,y)=\left(\widetilde{A}(y)\sinh\tilde\phi(x)+\widetilde{B}(y)\cosh\tilde\phi(x),\ \chi(x)\right)
$$
we immediately obtain
$\widetilde{A}(y)=A(y)\cosh\phi_0+B(y)\sinh\phi_0$ and
\linebreak
$\widetilde{B}(y)=A(y)\sinh\phi_0+B(y)\cosh\phi_0$.
The new curves $\widetilde{A}$ and $\widetilde{B}$ satisfy the same conditions as the initial curves $A$ and $B$.
This kind of change could be useful in some proofs (see e.g. Theorem~\ref{dmn:thm5}).
\end{remark}


Now, we would like to give some examples of surfaces
that can be retrieved from Theorem~\ref{dmn:thm3}.
In order to do this, we have to determine for each of the three cases the functions
$A_j$, $B_j$, $j=\overline{1,3}$ which satisfy the required conditions.

\smallskip
Let us consider first the easiest situation when $\psi(y)=0$ for all $y$.
\begin{example}\rm
\label{ex:2}
In {\bf Case 1} of Theorem~\ref{dmn:thm3} we get
$$
\displaystyle
A_j(y)=\int_0^y H_j(\tau)d\tau +c_{1j},
\
B_j(y)=c_{2j},
\
\displaystyle H_j'(y)=-\int_0^y H_j(\tau)d\tau -c_{1j}.
$$
\end{example}

It follows that $H(y)=l\cos y +m\sin y$, where $l,m\in \R^3$ are constant vectors.
Since $H$ is unitary, we find $\langle l,l\rangle=1$, $\langle m,m\rangle=1$,
$\langle l,m\rangle=0$.
Let us choose $l=(1,0,0)$ and $m=(0,-1,0)$. Going back to the expression of
$H'$ we obtain $c_{1}=-m$. Straightforward computations yield that
$A(y)=(\sin y, \cos y, 0)$ and we can choose $B(y)=(0,0,1)$.

The parametrization $F$ in this case is given by
\begin{equation*}
\begin{array}{l}
F(x,y) = \displaystyle \left(\sin y \sinh\Big(\int_0^x\cos\theta(\tau)d\tau\Big),\
\cos y \sinh\Big(\int_0^x\cos\theta(\tau)d\tau\Big),\ \right. \vspace{2mm} \\
\qquad\qquad\qquad \displaystyle\left. \cosh\Big(\int_0^x\cos\theta(\tau)d\tau\Big),\
\int_0^x\sin\theta(\tau)d\tau \right).
\end{array}
\end{equation*}
This surface can be obtained by taking the curve
$$
\left(\sinh\big(\int_0^x\cos\theta(\tau)d\tau\big),\
\cosh\big(\int_0^x\cos\theta(\tau)d\tau\big),\
\int_0^x\sin\theta(\tau)d\tau \right)
$$
in $\H^1\times {\mathbb{R}}\subset \H^2\times {\mathbb{R}}$ and rotating it
in an appropriate way (in the $(x_1x_2)$-plane of $\R^3_1\times\R$).

For example, if $\theta(x)=x$, which yields $\kappa=1$ in the classification theorem,
a nice parametrization arises, namely
\begin{equation*}
\begin{array}{l}
F(x,y) = \displaystyle \big(\sin y \sinh(\sin x),\
\cos y \sinh(\sin x),\
\displaystyle \cosh(\sin x),\ 1-\cos x \big).
\end{array}
\end{equation*}

\begin{example}
\rm
Let's see what parametrization springs up in {\bf Case 2} of Theorem~\ref{dmn:thm3}, when the function $\psi$
vanishes identically. Following the idea of the previous example,
we get $B(y)=(\sinh y,0, \cosh y)$ and we can take $A(y)=(0,1,0)$.
Hence, one obtains the following parametrization
\begin{equation*}
\begin{array}{lll}
F(x,y)& = &\displaystyle \left(\sinh y \cosh\Big(\int_0^x\cos\theta(\tau)d\tau\Big),\
\sinh\Big(\int_0^x\cos\theta(\tau)d\tau\Big),\ \right.\vspace{2mm} \\
& & \qquad \displaystyle\left. \cosh y \cosh\Big(\int_0^x\cos\theta(\tau)d\tau\Big),\
\int_0^x\sin\theta(\tau)d\tau \right).
\end{array}
\end{equation*}
For instance, if $\theta(x)=\arccos(x)$ the surface is given by

{\small
$F(x,y) = \left(\sinh y \cosh \frac{x^2}{2},\
\sinh \frac{x^2}{2},\ \cosh y \cosh \frac{x^2}{2},\
\frac{1}{2}\big(x \sqrt{1-x^2}+\arcsin x\big) \right).
$}
\end{example}

Some other interesting examples can be obtained considering $\psi(y)=y$.

\begin{example}\rm
For {\bf Case 1} we find the parametrization
$$\begin{array}{l}
F(x,y)=\Big(A(y)\sinh\big(\int_0^x\cos\theta(\tau)d\tau\big)+B(y)\cosh\big(\int_0^x\cos\theta(\tau)d\tau\big),
\\
\hspace{40mm}
\int_0^x\sin\theta(\tau)d\tau \Big)
\end{array}
$$
where
{\small
$$
A(y)=\Big(y\sinh y-\cosh y,\ -\frac{y^2}{2}\sinh y + y \cosh y,\ \frac{y^2}{2}\sinh y - y \cosh y+\sinh y\Big)
$$}
{\small
$$
B(y)=\Big(y\cosh y-\sinh y,\ -\frac{y^2}{2}\cosh y + y \sinh y,\ \frac{y^2}{2}\cosh y - y \sinh y+\cosh y\Big).
$$
}
\end{example}
\begin{example}\rm
For the second case of Theorem~\ref{dmn:thm3} we get a similar parametrization
$$
F(x,y)=\left(A(y)\sinh\Big(\int_0^x\cos\theta(\tau)d\tau\Big)+B(y)\cosh\Big(\int_0^x\cos\theta(\tau)d\tau\Big),\right.
$$
$$
\left.\int_0^x\sin\theta(\tau)d\tau \right)
$$
with
\begin{equation*}
\begin{array}{lll}
A(y)&=&\Big(\sinh(y\sqrt2)\sinh y-\frac{1}{\sqrt2}\cosh(y\sqrt2)\cosh y,\
\frac{1}{\sqrt2}\cosh y,\ \qquad \vspace{3mm} \\
& & \qquad \cosh(y\sqrt2)\sinh y-\frac{1}{\sqrt2}\sinh(y\sqrt2)\cosh y\Big)
\end{array}
\end{equation*}
\begin{equation*}
\begin{array}{lll}
B(y)&=&\Big(\sinh(y\sqrt2)\cosh y-\frac{1}{\sqrt2}\cosh(y\sqrt2)\sinh y,\
\frac{1}{\sqrt2}\sinh y,\ \qquad \vspace{3mm} \\
& & \qquad \cosh(y\sqrt2)\cosh y-\frac{1}{\sqrt2}\sinh(y\sqrt2)\sinh y\Big).
\end{array}
\end{equation*}
\end{example}
\begin{example}\rm
\label{dmn:parabola}
Concerning the last case in Theorem~\ref{dmn:thm3},
let us choose for example $c_1=(0,1,0)$, $c_2=(0,0,1)$ and $c_3=(1,0,0)$.
It follows that
the parametrization is given by
$$
F(x,y)=\Big(
A(y)\sinh \Big(\int_0^x\cos\theta(\tau)d\tau\Big)+ B(y)\cosh \Big(\int_0^x\cos\theta(\tau)d\tau\Big),
$$
$$
 \int_0^x\sin\theta(\tau)d\tau\ \Big)
$$
{\rm where} $\displaystyle A(y)=\Big(y,\ 1-\frac{y^2}{2},\ \frac{y^2}{2}\Big)$ {\rm and}
$\displaystyle B(y)=\Big(y,\ -\frac{y^2}{2},\ 1+\frac{y^2}{2}\Big)$.
\end{example}
An interesting situation occurs when we consider $\theta(x)=x^2$.
In this case, the surface is
$$
  F(x,y) = \left(A(y)\sinh \Big(\sqrt{\frac{\pi}{2}}\ C\Big(\sqrt{\frac{2}{\pi}}\ x\Big)\Big)
+ B(y)\cosh \Big(\sqrt{\frac{\pi}{2}}\ C\Big(\sqrt{\frac{2}{\pi}}\ x\Big)\Big),\ \right.
$$
$$
\left. \sqrt{\frac{\pi}{2}}\ S\Big(\sqrt{\frac{2}{\pi}}\ x\Big) \right)
$$
where {\em C} and {\em S} are the traditional notations for the Fresnel integrals
$\displaystyle C(z)=\int_0^z\cos\left( \frac{\pi t^2}{2}\right)dt$ respectively
$\displaystyle S(z)=\int_0^z\sin\left( \frac{\pi t^2}{2}\right)dt$. The curve involved in the classification theorem is,
in this case, given by $(\phi(x), \chi(x))=(C(x),\ S(x))$, known as {\em Cornu spiral}.

Inspired by the way of writing the parametrization of a surface in
$\H^2\times\R$ with a principal direction $T$ everywhere starting with
two curves in $\S^2_1$ respectively in $\H^2$,
we would like to reformulate the classification Theorem~\ref{dmn:thm4}  using
just one curve as follows

\begin{theorem}
\label{dmn:thm5}
Let $F:M\rightarrow\H^2\times\R$ be an isometrically immersed surface $M$ in $\H^2\times\R$,
with $\theta\neq0, \frac\pi 2$. Then $M$ has $T$ as a principal direction if and only if
$F$ is given, up to rigid motions of the ambient space, by
\begin{equation}
\label{dmn:ft5}
F(x,y)=\Big( f(y)\cosh\phi(x)+ N_f(y)\sinh\phi(x), \chi(x) \Big)
\end{equation}
where $f(y)$ is a regular curve in $\H^2$ and
$N_f(y)=\frac{f(y)\boxtimes f'(y)}{\sqrt{\<f'(y),f'(y)\>}}$ represents the normal of $f$.
Moreover, $(\phi,\chi)$ is a regular curve in $\R^2$ and the angle
function $\theta$ of this curve is the same as the angle function of
the surface parametrized by $F$.
\end{theorem}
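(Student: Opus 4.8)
The plan is to obtain Theorem~\ref{dmn:thm5} as a mere reformulation of the classification Theorem~\ref{dmn:thm4}: the only genuinely new content is to encode the pair of curves $(A,B)$ appearing there by a single curve $f$ in $\H^2$ together with its normal $N_f$, via the substitution $f=B$, $N_f=A$. So the whole proof will be a bookkeeping exercise comparing the two normal forms.

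For the direct implication I would start from Theorem~\ref{dmn:thm4}: if $T$ is a principal direction then, up to isometries of the ambient space, $F(x,y)=(A(y)\sinh\phi(x)+B(y)\cosh\phi(x),\chi(x))$ with $A$ a curve in $\S_1^2$, $B$ a curve in $\H^2$, $\langle A,B\rangle=0$, $A'\parallel B'$, and $(\phi,\chi)$ a regular curve in $\R^2$ whose angle function is $\theta$. After possibly adjusting the additive constant in $\phi$ (legitimate by Remark~\ref{rmk:changeAB}, which shows the form is preserved) and shrinking the coordinate patch, one may assume $B$ is a \emph{regular} curve, so that $f:=B$ satisfies the hypothesis of the theorem and $N_f=\frac{f\boxtimes f'}{\sqrt{\langle f',f'\rangle}}$ is defined. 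It remains to identify $A$ with $N_f$. Using $\langle A,A\rangle=1$, $\langle A,B\rangle=0$ and $\langle A',B\rangle=\langle A,B'\rangle=0$ from Theorem~\ref{dmn:thm4}, the vector $A$ is a spacelike unit vector orthogonal to the timelike vector $f=B$ and to $f'=B'$; since $\mathrm{span}\{f,f'\}$ is a Lorentzian plane, its orthogonal complement is a spacelike line, which is exactly the line through $N_f$, so $A=\pm N_f$. The sign is where the explicit form of $\boxtimes$ and the relation $H=A\boxtimes B$ enter; if it comes out wrong, it is absorbed by the reparametrization $y\mapsto -y$ (which flips both $f'$ and hence $N_f$, leaving $\phi$, $\chi$ and the surface untouched). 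This puts $F$ in the form \eqref{dmn:ft5}, and the statement about the angle function is inherited verbatim from Theorem~\ref{dmn:thm4}.

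For the converse, given a regular curve $f$ in $\H^2$ I would set $A:=N_f$, $B:=f$, and check that $(A,B,\phi,\chi)$ meets the hypotheses of Theorem~\ref{dmn:thm4}: $\langle B,B\rangle=-1$ because $f$ lies in $\H^2$; $\langle A,B\rangle=0$ and $\langle A,A\rangle=1$ from the standard identities for the Lorentzian cross-product together with $\langle f,f'\rangle=0$; and, differentiating $\langle N_f,f\rangle=0$, $\langle N_f,f'\rangle=0$, $\langle N_f,N_f\rangle=1$, one gets $\langle A',B\rangle=\langle A,B'\rangle=0$ and $N_f'\perp f$, $N_f'\perp N_f$, hence $N_f'\parallel f'$, i.e. $A'\parallel B'$. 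Theorem~\ref{dmn:thm4} then yields an isometric immersion with $T$ a principal direction, parametrized by $F(x,y)=(A(y)\sinh\phi(x)+B(y)\cosh\phi(x),\chi(x))=(f(y)\cosh\phi(x)+N_f(y)\sinh\phi(x),\chi(x))$ and with angle function that of $(\phi,\chi)$, which is \eqref{dmn:ft5}.

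The step I expect to be the main obstacle is precisely the identification $A=\pm N_f$ in the direct part: arranging, via Remark~\ref{rmk:changeAB}, that $B$ is genuinely regular so that $N_f$ makes sense, verifying the orthogonality that forces $A$ onto the spacelike line $\{f,f'\}^\perp$, and then pinning down the orientation (and correcting it by $y\mapsto -y$ if needed). Everything else is a routine translation of Theorem~\ref{dmn:thm4} under $f=B$, $N_f=A$.
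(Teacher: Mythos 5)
Your proposal is correct and follows essentially the same route as the paper: both deduce the statement from Theorem~\ref{dmn:thm4} by setting $f=B$ and identifying $A=\pm N_f$ from the orthogonality relations, and both use the shift of $\phi$ from Remark~\ref{rmk:changeAB} to handle the case where $B$ is constant (so that $B'$ becomes nonzero and $N_f$ is defined). You are in fact slightly more explicit than the paper on two points the authors gloss over --- absorbing the sign ambiguity in $A=\pm N_f$ by $y\mapsto -y$, and routing the converse through the hypotheses of Theorem~\ref{dmn:thm4} rather than redoing the direct computation.
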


\begin{proof}
In the classification Theorem~\ref{dmn:thm4}, if $B$ is not a constant (timelike) vector, rename it by $f$
and hence $f\in\H^2$ with $\<f',f'\> >0$. The curve $A$, from the same theorem, lies on $\S_1^2$ and it is
orthogonal to $B$. Moreover $A'$ and $B'$ are parallel. This yields that $A$ can be identified by
$A=\pm \frac{1}{\sqrt{\<f'(y),f'(y)\>}}~f(y)\boxtimes f'(y)$ and hence parametrization \eqref{dmn:ft5} is obtained.

Suppose $B$ is a constant vector. Adding a constant to $\phi$,
we consider the parametrization
$$
F(x,y)=\left(\widetilde{A}(y)\sinh\tilde\phi(x)+\widetilde{B}(y)\cosh\tilde\phi(x),\ \chi(x)\right)
$$
as in Remark~\ref{rmk:changeAB}.
Now $\widetilde{B}'$ is different from $0$ and hence we can proceed as in the previous case.

The converse part follows from direct computations proving that indeed $T$ is a principal direction of $M$.

\end{proof}

\begin{remark}\rm
If in the previous theorem the angle function $\theta$ is constant, we recover Theorem $3.2$
on the classification of constant angle surfaces in $\H^2\times\R$ in \cite{dmn:DM09}. See also \cite{dmn:MT} for an alternative proof.
\end{remark}


Another classical problem is the minimality of surfaces.
In the following we will investigate this property for surfaces having $T$ as a principal direction.

We give first an auxiliary result useful in the proof of the theorem.
\begin{lemma}           
\label{dmn:lemma3}
Let
\begin{equation*}
\f''-2\cot\f\ \f'^2+\cos\f\sin\f=0
\end{equation*}
be a second order ODE where $\f$ is a smooth function of one variable $x$. Then, the
solution is given by
$\f=\arctan\left(\frac{1}{a(x)}\right)$ where
\linebreak
$a(x)=c_1\cosh x + c_2\sinh x$, $(c_1, c_2\in \R)$ never vanishes.
\end{lemma}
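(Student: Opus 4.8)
The plan is to reduce the given second-order autonomous ODE $\f''-2\cot\f\,\f'^2+\cos\f\sin\f=0$ to a first-order equation by a standard trick and then to a linear ODE via a change of the dependent variable. First I would observe that the equation is autonomous in $x$, so I set $p=\f'$ and write $\f''=p\,\frac{dp}{d\f}$, obtaining $p\,\frac{dp}{d\f}-2\cot\f\,p^2+\cos\f\sin\f=0$. Dividing by $p$ (on the open set where $p\neq 0$; the case $\f'\equiv 0$ forces $\cos\f\sin\f=0$, i.e.\ $\f$ constant, which is covered by $c_2=0$ or a limiting choice of constants) gives the Bernoulli-type equation $\frac{d(p^2)}{d\f}-4\cot\f\,p^2=-2\cos\f\sin\f$, linear in $p^2$. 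Its integrating factor is $\sin^{-4}\f$, leading after one quadrature to $p^2=\sin^4\f\,(C-\int 2\cos\f\sin^{-3}\f\,d\f)$; computing the elementary integral $\int 2\cos\f\sin^{-3}\f\,d\f=-\sin^{-2}\f$ yields $\f'^2=p^2=\sin^2\f+C\sin^4\f$ for a constant $C$.

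The cleaner route, which I would actually present, is to guess the substitution suggested by the target answer: put $a=\cot\f$, equivalently $\f=\arctan(1/a)$. Then $\f'=-\frac{a'}{1+a^2}$ and a direct differentiation, using $\cot\f=a$, $\csc^2\f=1+a^2$, $\sin\f\cos\f=\frac{a}{1+a^2}$, transforms the ODE termwise. I would compute $\f''$, substitute $\cot\f=a$, multiply through by the common denominator $(1+a^2)^2$, and check that all the nonlinear terms in $a'$ cancel, collapsing the equation to the linear constant-coefficient ODE $a''-a=0$. Its general solution is exactly $a(x)=c_1\cosh x+c_2\sinh x$, and $\f=\arctan(1/a)$ is well defined precisely where $a$ does not vanish (at a zero of $a$ the function $\f$ would pass through $0$ or $\pi$, which the hypothesis excludes).

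The main obstacle is purely computational: verifying that the substitution $a=\cot\f$ really does annihilate all the $\f'^2=\left(\frac{a'}{1+a^2}\right)^2$ contributions and leaves only $a''-a$. Concretely, from $\f'=-\dfrac{a'}{1+a^2}$ one gets
\[
\f''=-\frac{a''}{1+a^2}+\frac{2a(a')^2}{(1+a^2)^2},
\]
while $-2\cot\f\,\f'^2=-\dfrac{2a(a')^2}{(1+a^2)^2}$ and $\cos\f\sin\f=\dfrac{a}{1+a^2}$; adding these three, the $(a')^2$-terms cancel identically and one is left with $-\dfrac{a''-a}{1+a^2}=0$. Since $1+a^2>0$, this is equivalent to $a''=a$, giving the claimed form. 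I would close by noting that $\f'=-\frac{a'}{1+a^2}$ never vanishes identically unless $a'\equiv 0$, consistent with the regularity implicit in the statement, and that conversely every such $a$ yields, via $\f=\arctan(1/a)$, a solution of the original ODE, so the description of the solution set is complete.
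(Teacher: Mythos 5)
Your argument is correct and complete. The paper actually states Lemma~\ref{dmn:lemma3} without proof, so there is no official argument to compare against; your substitution $a=\cot\f$ (equivalently $\f=\arctan(1/a)$), which collapses the equation to $-\frac{a''-a}{1+a^2}=0$ after the exact cancellation of the $(a')^2$-terms you display, is surely the computation the authors had in mind, and your remark that the reduction is reversible wherever $\sin\f\neq0$ and $a\neq0$ settles that \emph{all} solutions on the relevant domain arise this way, which is what Theorem~\ref{dmn:thm6} needs. The preliminary Bernoulli reduction to $\f'^2=\sin^2\f+C\sin^4\f$ is consistent (it corresponds to $(a')^2-a^2=C+1$, i.e. $C=c_2^2-c_1^2-1$) but is not needed once the linearizing substitution is in hand.
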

We can state now the following classification theorem.
\begin{theorem}
\label{dmn:thm6}
Let $M$ be a surface isometrically immersed in $\H^2\times\R$, with $\theta\neq0, \frac\pi 2$. Then $M$ is minimal with $T$ as principal
direction if and only if the immersion is, up to isometries of the ambient space, locally given by one of the
following cases

$\hspace{3mm}\quad\quad\quad  F:M\longrightarrow\H^2\times\R$
\scriptsize
\begin{subequations}
\renewcommand{\theequation}{\theparentequation .\alph{equation}}
\label{dmn:t6_1}
\begin{eqnarray}
&& \label{dmn:t6_1a}
F(x,\ y)=\left(\frac{b(x)}{\sqrt{1+c_1^2-c_2^2}}~,\ \frac{\sqrt{a^2(x)+1}}{\sqrt{1+c_1^2-c_2^2}}~\sinh y,\
\frac{\sqrt{a^2(x)+1}}{\sqrt{1+c_1^2-c_2^2}}~\cosh y,\ \chi(x)\right)\\[2mm]
&&\label{dmn:t6_1b}
F(x,\ y)=\left(\frac{\sqrt{a^2(x)+1}}{\sqrt{c_2^2-c_1^2-1}}~\cos y,\ \frac{\sqrt{a^2(x)+1}}{\sqrt{c_2^2-c_1^2-1}}~\sin y,\
\frac{b(x)}{\sqrt{c_2^2-c_1^2-1}}~,\ \chi(x)\right)\\[2mm]
&&
\label{dmn:t6_1c}
F(x,\ y)=\left(b(x)~y,\ \frac{b(x)}{2}~(1-y^2)-\frac{1}{2b(x)},\ \frac{b(x)}{2}~(1+y^2)+\frac{1}{2b(x)},\ \chi(x)\right)
\end{eqnarray}
\end{subequations}
\normalsize
where
\begin{equation}
\label{dmn:t6_2}
\chi(x)=\displaystyle\int\limits_0\limits^x\frac{1}{\sqrt{a^2(\tau)+1}}\ d\tau\
\end{equation}
with $a(x)=c_1\cosh x + c_2\sinh x$, $b(x)=a'(x)$  and $c_1,c_2\in \R$ such that all quantities involved in previous
expressions are well defined.
\end{theorem}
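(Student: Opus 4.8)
The plan is to run the argument of Theorem~\ref{dmn:thm3} with the minimality hypothesis imposed from the outset, which forces $\theta$ to solve the ODE of Lemma~\ref{dmn:lemma3} and thereby pins everything down. First I would choose canonical coordinates $(x,y)$ as in Proposition~\ref{dmn:prop3}, so that $g=dx^2+\b^2(x,y)\,dy^2$, the angle function $\theta$ depends only on $x$, and $A$ is given by \eqref{dmn:p4_2}. Then $H=0$ becomes $\theta_x+\tan\theta\,\b_x/\b=0$ by \eqref{dmn:p4_2}, and since $\theta=\theta(x)$ this integrates at once to $\b(x,y)=\rho(y)/\sin\theta(x)$ with $\rho>0$ a function of $y$ alone.

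Next I would feed $\b=\rho/\sin\theta$ into the compatibility PDE \eqref{dmn:p4_3}. A short trigonometric reduction shows that the $y$--dependence cancels and $\theta$ must satisfy $\theta_{xx}-2\cot\theta\,\theta_x^2+\sin\theta\cos\theta=0$, which is exactly the equation of Lemma~\ref{dmn:lemma3}. Hence $\theta(x)=\arctan\bigl(1/a(x)\bigr)$ with $a(x)=c_1\cosh x+c_2\sinh x$, so that (up to orientation) $\sin\theta=(a^2+1)^{-1/2}$ and $\cos\theta=a(a^2+1)^{-1/2}$. In particular $\chi(x)=\int_0^x\sin\theta(\tau)\,d\tau=\int_0^x\bigl(a^2(\tau)+1\bigr)^{-1/2}d\tau$, which is \eqref{dmn:t6_2}, while $\phi$ is still a primitive of $\cos\theta$.

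It then remains to identify the curves. From the first integration of \eqref{dmn:p4_3} one has $\b_x^2/\cos^2\theta-\b^2=\tilde k(y)$; substituting $\b=\rho/\sin\theta$ and using $b^2-a^2=c_2^2-c_1^2$, where $b:=a'$ (note also $a''=a$), gives $\tilde k(y)=\rho^2(y)(c_2^2-c_1^2-1)$. Thus $\tilde k$ has constant sign, and according as $c_2^2-c_1^2-1$ is positive, negative or zero we are in the first, second or third case of Theorem~\ref{dmn:thm3}, which I expect to produce \eqref{dmn:t6_1b}, \eqref{dmn:t6_1a}, \eqref{dmn:t6_1c} respectively. In the relevant case Theorem~\ref{dmn:thm3} writes $F(x,y)=\bigl(A(y)\sinh\phi(x)+B(y)\cosh\phi(x),\ \chi(x)\bigr)$ with $A$ on $\S_1^2$, $B$ on $\H^2$, $\<A,B\>=0$ and $A'\parallel B'$. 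Writing $A'=\lambda(y)H$, $B'=\mu(y)H$ with $H$ the common unit direction and comparing $g(F_y,F_y)=(\lambda\sinh\phi+\mu\cosh\phi)^2$ with $\b^2=\rho^2/\sin^2\theta$, then differentiating once in $x$, forces $e^{\phi(x)}$ (or $e^{-\phi(x)}$) to be a constant multiple of $\sqrt{a^2(x)+1}+b(x)$; hence $\sinh\phi$ and $\cosh\phi$ become explicit combinations of $\sqrt{a^2+1}$ and $b$, with coefficients governed by the sign of $1+c_1^2-c_2^2$. Substituting back and reducing $A,B$ to normal form by an isometry of $\R^3_1$ then yields \eqref{dmn:t6_1a}--\eqref{dmn:t6_1c}; in case \eqref{dmn:t6_1c} one uses $b^2=a^2+1$ to rewrite everything through $b$.

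The hard part will not be conceptual but organizational: running the three cases in parallel, keeping track of orientations and of the signs of the various square roots, and recording the open conditions (such as $b(x)\neq0$, $a^2+1\neq b^2$, and a definite sign for $1+c_1^2-c_2^2$) under which the parametrizations are well defined --- together with the slightly delicate simplification in the second step. Finally, for the converse I would verify by direct substitution that each of \eqref{dmn:t6_1a}--\eqref{dmn:t6_1c} takes values in $\H^2\times\R$, has induced metric of the form $dx^2+\b^2(x)\,dy^2$, and has a diagonal, trace--free shape operator, so that $T\parallel\partial_x$ is indeed a principal direction and $M$ is minimal.
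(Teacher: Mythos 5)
Your proposal is correct and follows essentially the same route as the paper: canonical coordinates, integrating $H=0$ to get $\b=\rho(y)/\sin\theta$, reducing \eqref{dmn:p4_3} to the ODE of Lemma~\ref{dmn:lemma3}, and then a trichotomy on the sign of $c_2^2-c_1^2-1$ to pin down $\phi$ and the curves $A,B$ via the parametrization of Theorem~\ref{dmn:thm3}/Theorem~\ref{dmn:thm4}. The only cosmetic difference is that you organize the case split through the first integral $\tilde k(y)=\rho^2(y)(c_2^2-c_1^2-1)$, whereas the paper reads the same sign condition off $\cos\theta=b'/\sqrt{b^2+c_1^2-c_2^2+1}$ and the resulting formula for $\phi$; your case-to-formula correspondence and the identification $e^{\pm\phi}\propto\sqrt{a^2+1}+b$ match the paper's computations.
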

\begin{remark}\rm
In all three cases, the curve $(\phi(x),\ \chi(x))$ is determined up to some real constants $c_1$ and $c_2$ by the angle function
$\theta=\arctan\left(\frac{1}{a(x)}\right)$, where $a(x)=c_1\cosh x + c_2\sinh x$ with $c_1^2+c_2^2\neq 0$.
In particular, in each case of the previous theorem we have
\small
\begin{subequations}
\renewcommand{\theequation}{\theparentequation .\alph{equation}}
\label{dmn:t66_1}
\begin{eqnarray}
&& \label{dmn:t66_1a}
\sinh\phi(x)=\frac{b(x)}{\sqrt{1+c_1^2-c_2^2}}~,\ \cosh\phi(x)=\frac{\sqrt{a^2(x)+1}}{\sqrt{1+c_1^2-c_2^2}}~,\ \\
&& A(y)=(1,\ 0,\ 0)\ {\rm and}\ B(y)=(0,\ \sinh y,\ \cosh y)\nonumber\\[2mm]
&&\label{dmn:t66_1b}
\sinh\phi(x)=\frac{\sqrt{a^2(x)+1}}{\sqrt{c_2^2-c_1^2-1}}~,\ \cosh\phi(x)=\frac{b(x)}{\sqrt{c_2^2-c_1^2-1}}~,\ \\
&& A(y)=(\cos y,\ \sin y,\ 0)\ {\rm and}\ B(y)=(0,\ 0,\ 1) \nonumber\\[2mm]
&&
\label{dmn:t66_1c}
\phi(x)=\pm\ln b(x)~,\\
&& A(y)=\left(y,\ 1-\frac{y^2}{2},\ \frac{y^2}{2}\right)\ {\rm and}\ B(y)=\left(y,\ -\frac{y^2}{2},\ 1+\frac{y^2}{2}\right).
\nonumber
\end{eqnarray}
\end{subequations}
\normalsize

\end{remark}

\begin{proof}[Proof of Theorem $6$]
We choose canonical coordinates $x$ and $y$ as in Proposition~$\ref{dmn:prop3}$.
Starting with the classification Theorem~\ref{dmn:thm4}, the isometric immersion is given by
$$
F(x,y)=\left(A(y)\sinh\phi(x)+B(y)\cosh\phi(x), \chi(x)\right)
$$
where $A(y)$ is a curve in $\S_1^2$, $B(y)$ is a curve in $\H^2$, such that
$\langle A,B\rangle=0$ and $A'|| B'$. Recall that $\phi'(x)=\cos\theta(x)$ and $\chi'(x)=\sin\theta(x)$.
The metric of this surface is given by \eqref{dmn:p4_1}, where
\begin{equation}
\label{dmn:tt6_1}
\beta^2(x,y)=
\frac{1}{2}\big(\<A',A'\>+\<B',B'\>\big)\cosh 2\phi(x)+ \qquad
\end{equation}
$$
           \qquad + \<A',B'\>\sinh2\phi(x)+ \frac{1}{2}\big(-\<A',A'\>+\<B',B'\>\big).
$$
The minimality condition yields
$\displaystyle \theta_x+\tan\theta\frac{\b_x}{\b}=0$. Integrating it once we obtain
$\b=\frac{1}{\sin\theta}
$ 
after a change of $y$-coordinate.
Recall that $\beta$ and $\theta$ are also related by the general equation \eqref{dmn:p4_3}.
Substituting here the expression of $\b$, we
get that the angle function $\theta$ satisfies
$$\theta_{xx}-2\cot\theta\theta_x^2+\cos\theta\sin\theta=0
$$
which has the general solution given by Lemma~\ref{dmn:lemma3}, namely
\begin{equation*}
\label{dmn:tt6_3}
\theta=\arctan\left(\frac{1}{a(x)}\right)
\ {\rm{where}}\ a(x)=c_1\cosh x + c_2\sinh x\  (c_1, c_2\in \R).
\end{equation*}

Next, we immediately compute $\sin\theta=\frac{1}{\sqrt{a^2(x)+1}}$. Hence, the fourth component of the parametrization $F$,
i.e. $\chi(x)$, is given by \eqref{dmn:t6_2} and $\b=\sqrt{a^2(x)+1}$.
Moreover, $\cos\theta=\frac{b'(x)}{\sqrt{b^2(x)+c_1^2-c_2^2+1}}$.

In order to determine explicitly $\phi(x)=\int\cos\theta(x)~ dx$ we distinguish the following cases.

{\bf Case 1.} If $c_1^2-c_2^2+1>0$, then $\phi(x)= {\rm arcsinh}\left(\frac{b(x)}{\sqrt{c_1^2-c_2^2+1}}\right)$, yielding that
$\cosh\phi(x)$ and $\sinh\phi(x)$ are obtained as in \eqref{dmn:t66_1a}.
Combining \eqref{dmn:tt6_1} with $\b=\sqrt{c_1^2-c_2^2+1}\cosh\phi$,
one gets that  the curves $A$ and $B$ must satisfy the following
conditions
$$
\<A',A'\>=0,\ \<B',B'\>=c_1^2-c_2^2+1,\ \<A',B'\>=0.
$$
At this point we conclude that $A$ is a constant curve on $\S_1^2$.
Hence, up to some constants and after a change of $y-$coordinate $y\sqrt{1+c_1^2-c_2^2}\equiv y$,
one can  choose them as in \eqref{dmn:t66_1a} obtaining parametrization \eqref{dmn:t6_1a}.\\
{\bf Case 2.} If $c_1^2-c_2^2+1<0$, then $\phi(x)= {\rm arccosh}\left(\frac{b(x)}{\sqrt{c_2^2-c_1^2-1}}\right)$. Following the same idea as
in the previous case we get similar conditions that must be satisfied by the curves $A$ and $B$:
$$
\<A',A'\>=c_2^2-c_1^2-1,\ \<B',B'\>=0,\ \<A',B'\>=0.
$$
Hence, $B$ is a constant curve on $\H^2$. As in Case 1, curves $A$ and $B$ can be taken as in \eqref{dmn:t66_1b}. With these considerations
\eqref{dmn:t6_1b} is obtained.\\
{\bf Case 3.} If $c_1^2-c_2^2+1=0$, $\phi(x)=\pm\ln b(x)$. In this case
the curves $A$ and $B$ must satisfy
$$
\<A',A'\>=1,\ \<B',B'\>=1,\ \<A',B'\>=\pm 1
$$
and up to some constants, can be chosen as in \eqref{dmn:t66_1c} obtaining parametrization \eqref{dmn:t6_1c}.

{\bf Conversely}, it can be proved that parametrizations \eqref{dmn:t6_1} furnish a minimal surface having $T$ as
principal direction.

\end{proof}

\begin{remark}\rm
When one of the two constants vanishes, the $4^{\rm th}$ component
of the parametrization, in the previous theorem, can be rewritten
using elliptic functions as follows:

\qquad $ \chi(x)=\frac{1}{\sqrt{c^2+1}}\
F\left(\arccos\frac{1}{\cosh x }\ \big|\ \frac{1}{1+c^2}\right)
{\rm\ if\ } c_1=c {\rm\ and\ } c_2=0 $

\qquad $ \chi(x)=\ \ F\left(\arccos\frac{1}{\cosh x }\ \big|\
1-c^2\right) {\rm\ if\ } c_1=0 {\rm\ and\ } c_2=c $

where {\em F $(z\ |\ m)$}=$\displaystyle
\int_0^z\frac{dt}{\sqrt{1-m\sin^2t}}$ is the elliptic integral of
the first kind.
\end{remark}

Concerning the flatness property for surfaces having $T$ as a principal direction, we give
the following classification theorem.


\begin{theorem} 
\label{dmn:thm7}
Let $M$ be an isometrically immersed surface in $\H^2\times\R$, with $\theta\neq0, \frac\pi 2$. Then $M$ is flat with $T$ a principal
direction if and only if the immersion $F$ is, up to isometries of the ambient space, given by

$\hspace{12mm}\quad\quad\quad  F:M\longrightarrow\H^2\times\R$
\small
\begin{subequations}
\renewcommand{\theequation}{\theparentequation .\alph{equation}}
\label{dmn:t7_1}
\begin{eqnarray}
&&
\label{dmn:t7_1a}
F(x,\ y)=\left(\frac{x}{\sqrt{c+1}}\cos y,\ \frac{x}{\sqrt{c+1}}\sin y,\ \frac{\sqrt{x^2+c+1}}{\sqrt{c+1}},\ \chi(x)\right)\\[2mm]
&&\label{dmn:t7_1b}
F(x,\ y)=\left(\frac{\sqrt{x^2+c+1}}{\sqrt{-c-1}},\ \frac{x}{\sqrt{-c-1}}\sinh y,\ \frac{x}{\sqrt{-c-1}}\cosh y,\ \chi(x)\right)\ \\
&& \label{dmn:t7_1c}
\displaystyle
F(x,\ y)=\left(x y,\ \frac{x}{2}(1-y^2)-\frac{1}{2x},\ \frac{x}{2}(1+y^2)+\frac{1}{2 x},\ \chi(x)\right)
\end{eqnarray}
\end{subequations}
\normalsize
where
\begin{equation}
\label{dmn:t7_2}
\displaystyle\chi(x)=\int\limits^x\frac{\sqrt{\tau^2+c}}{\sqrt{\tau^2+c+1}}\ d\tau,\ c\in \R.
\end{equation}
\end{theorem}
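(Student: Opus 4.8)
The plan is to mimic the proof of Theorem~\ref{dmn:thm6}, replacing the minimality condition by $K=0$. First I would pick canonical coordinates $(x,y)$ as in Proposition~\ref{dmn:prop3}, so that $g=dx^2+\b^2(x,y)dy^2$, the shape operator is \eqref{dmn:p4_2}, $\theta=\theta(x)$, and $\b,\theta$ satisfy \eqref{dmn:p4_3}. By \eqref{dmn:K} and \eqref{dmn:p4_2}, flatness is equivalent to $\det A=\cos^2\theta$, i.e.
\[
\theta_x\tan\theta\,\b_x=\b\cos^2\theta .
\]
Inserting this into \eqref{dmn:p4_3} cancels its last two terms and leaves $\b_{xx}=0$, so $\b(x,y)=x\,u(y)+v(y)$. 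Substituting $\b$ back into the displayed identity and using $\theta=\theta(x)$ gives $\tfrac12(\tan^2\theta)'(x)-x=v(y)/u(y)$; since the left side depends on $x$ alone, $v/u$ is a constant, and after the admissible change $x\mapsto x+\mathrm{const}$ of Remark~\ref{dmn:remark4} we may take $\b=x\,u(y)$ and $\tan^2\theta=x^2+c$ for some $c\in\R$. Hence $\cos\theta=1/\sqrt{x^2+c+1}$, $\sin\theta=\sqrt{x^2+c}/\sqrt{x^2+c+1}$, so $\chi(x)=\int^x\sin\theta(\tau)\,d\tau$ is exactly \eqref{dmn:t7_2}; moreover a first integration of \eqref{dmn:p4_3} gives $\b_x^2/\cos^2\theta-\b^2=u(y)^2(c+1)$.

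Next I would invoke Theorem~\ref{dmn:thm4}: up to isometries, $F(x,y)=(A(y)\sinh\phi(x)+B(y)\cosh\phi(x),\chi(x))$ with $A$ a curve in $\S_1^2$, $B$ a curve in $\H^2$, $\<A,B\>=0$, $A'\|B'$ and $\phi'=\cos\theta$. Computing $\b^2=\widetilde g(F_y,F_y)$ as in \eqref{dmn:tt6_1}, i.e.\ $\b^2=\<A',A'\>\sinh^2\phi+2\<A',B'\>\sinh\phi\cosh\phi+\<B',B'\>\cosh^2\phi$, and equating with $\b^2=x^2u(y)^2$, the analysis splits into the three cases of Lemma~\ref{dmn:lemma1} according to the sign of $u^2(c+1)$, hence of $c+1$. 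In each case $\sinh\phi$ and $\cosh\phi$ are explicit functions of $x$: they equal $x/\sqrt{c+1}$ and $\sqrt{x^2+c+1}/\sqrt{c+1}$ when $c+1>0$, their roles are exchanged when $c+1<0$, and (after a shift of $\phi$ as in Remark~\ref{rmk:changeAB}) they equal $\tfrac12(x-x^{-1})$ and $\tfrac12(x+x^{-1})$ when $c+1=0$; so $\b^2=x^2u^2$ becomes a polynomial identity in $x$ (resp.\ in $x$ and $x^{-1}$). Matching coefficients forces, respectively: $\<A',B'\>=\<B',B'\>=0$ and $\<A',A'\>=(c+1)u^2$; $\<A',B'\>=\<A',A'\>=0$ and $\<B',B'\>=-(c+1)u^2$; and $\<A',A'\>=\<B',B'\>=\<A',B'\>=u^2$.

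From these I would read off the curves. When $c+1>0$, $\<B',B'\>=0$ and the positive-definiteness of the metric induced on $\H^2$ force $B$ to be a constant timelike vector and $A$ a nonconstant spacelike curve with $\<A,B\>=0$; after reparametrizing $y$ so that $u=1/\sqrt{c+1}$ (hence $\b=x/\sqrt{c+1}$ and $A$ has unit speed), $A$ traces a circle in the spacelike plane $B^\perp$, so up to an isometry $B=(0,0,1)$, $A(y)=(\cos y,\sin y,0)$, giving \eqref{dmn:t7_1a}. When $c+1<0$, symmetrically $A$ is constant spacelike and $B$ a unit-speed curve in the hyperbola $\H^2\cap A^\perp$, so up to an isometry $A=(1,0,0)$, $B(y)=(0,\sinh y,\cosh y)$, giving \eqref{dmn:t7_1b}. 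When $c+1=0$, the relation $\<A'-B',A'-B'\>=0$ together with $A'\|B'$ and $\<A',A'\>=u^2>0$ gives $A'=B'$ (after normalizing $u=1$), so $A-B$ is a constant lightlike vector; imposing $\<A,A\>=1$, $\<B,B\>=-1$, $\<A,B\>=0$ and applying an isometry yields the parabolas $A(y)=(y,1-\tfrac{y^2}{2},\tfrac{y^2}{2})$, $B(y)=(y,-\tfrac{y^2}{2},1+\tfrac{y^2}{2})$, giving \eqref{dmn:t7_1c}. For the converse one checks directly that each of \eqref{dmn:t7_1a}--\eqref{dmn:t7_1c} is an isometric immersion into $\H^2\times\R$ (the first three components obey $F_1^2+F_2^2-F_3^2=-1$, $F_3>0$), is of the form of Theorem~\ref{dmn:thm4} so that $T$ is a principal direction, and satisfies $\det A=\cos^2\theta$, i.e.\ $K=0$. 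I expect the main obstacle to be precisely this Lorentzian case analysis: a null tangent vector in $\R_1^3$ need not vanish, so deciding which of $A,B$ degenerates requires combining $A'\|B'$ with the positive-definiteness of $g_H$ on $\H^2$; and once that is done the surviving curve must be reduced to the normal forms above by an ambient isometry, keeping track of the residual freedom $\overline x=\pm x+\mathrm{const}$, $\overline y=\overline y(y)$ in the canonical coordinates.
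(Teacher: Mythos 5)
Your proposal is correct and follows essentially the same route as the paper's proof: flatness plus the Codazzi equation \eqref{dmn:p4_3} forces $\b$ to be linear in $x$, a separation-of-variables argument reduces to $\b=x$ (up to reparametrization) and $\tan^2\theta=x^2+c$, and the three parametrizations arise from the sign of $c+1$ via the conditions on $\<A',A'\>$, $\<B',B'\>$, $\<A',B'\>$ coming from \eqref{dmn:tt6_1}. Your explicit treatment of the degenerate (null) cases using $A'\,\|\,B'$ and the positive-definiteness of the metric on $\H^2$ fills in details the paper leaves implicit, but the argument is the same.
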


\begin{proof}
We use canonical coordinates and start with the parametrization given in Theorem~\ref{dmn:thm4}. Under the flatness condition we obtain that
$
\tan\theta\theta_x\b_x-\cos^2\theta\b=0.
$
Combining it with the PDE \eqref{dmn:p4_3} we obtain
$\b={\mathfrak{a}}(y)x+{\mathfrak{b}}(y)$ where ${\mathfrak{a}}$ and ${\mathfrak{b}}$ are smooth
functions on $M$ depending only on $y$ with ${\mathfrak{a}}$ nowhere vanishing, and such that
\begin{equation}
\label{dmn:tt7_1}
\frac{{\mathfrak{b}}(y)}{{\mathfrak{a}}(y)}=\frac{\tan\theta\theta_x-x \cos^2\theta}{\cos^2\theta}\ .
\end{equation}
Since the right hand side of \eqref{dmn:tt7_1} depends only on $x$, it follows that both sides are
equal to the same constant, say
$c_0\in\R$. Thus
\begin{eqnarray}
\label{dmn:tt7_2}
\b={\mathfrak{a}}(y)(x+c_0)\\[2mm]
\label{dmn:tt7_3}
\cos^2\theta(x+c_0)-\tan\theta\theta_x=0.
\end{eqnarray}
One can change $(x,y)-$coordinates in \eqref{dmn:tt7_2} such that $\b=x$,
but with no effect on the other formulas in Proposition~\ref{dmn:prop3}.
Then from \eqref{dmn:tt7_3} one finds
\begin{equation}
\label{dmn:tt7_5}
\theta=\arctan(\sqrt{x^2+c}),\ c\in \R.
\end{equation}

By direct computations, $\sin\theta=\frac{\sqrt{x^2+c}}{\sqrt{x^2+c+1}}$ and $\cos\theta=\frac{1}{\sqrt{x^2+c+1}}$.
Hence, the last component of the parametrization is given by \eqref{dmn:t7_2}. Let us distinguish the following cases for the real constant $c$.

{\bf Case 1)} $c \geq 0 $.
The solution \eqref{dmn:tt7_5} for $\theta$ is well defined for $x\in (0,\ +\infty)$.
As $c\geq 0$, one gets that $c+1>0$ and
so, $\phi(x)={\rm arcsinh}\left(\frac{x}{\sqrt{c+1}}\right)$. This yields
$\cosh\phi(x)=\frac{\sqrt{x^2+c+1}}{\sqrt{c+1}}$. 
Combining both $\b=\sqrt{c+1}~\sinh\phi$ and \eqref{dmn:tt6_1} it follows that
the curves $A$ and $B$ must satisfy
$$
\<A',A'\>=c+1,\ \<B',B'\>=0,\ \<A',B'\>=0.
$$
Therefore $B$ is a constant curve in $\H^2$. Up to some constants and after a change of the $y-$coordinate
$y\sqrt{c+1}\equiv y$, one can choose
\linebreak
$A=(\cos y,\ \sin y,\ 0)$ and $B=(0,\ 0,\ 1)$ obtaining \eqref{dmn:t7_1a}.

{\bf Case 2)} $c<0$.\\
{\bf Case 2.a)} $c\in (-1,\ 0)$. The angle function \eqref{dmn:tt7_5} is well defined for $x\in(\sqrt{-c},\ +\infty)$. We get again
$c+1>0$ and the rest of the computations are the same as in {\bf Case 1)} occurring also in this case the parametrization \eqref{dmn:t7_1a}.\\
{\bf Case 2.b)} $c<-1$. Using similar arguments as in the previous cases the domain of $x$ is $(\sqrt{-c},\ +\infty)$.
Moreover, $\phi(x)={\rm arccosh}\left(\frac{x}{\sqrt{-c-1}}\right)$. The curves $A$ and $B$ fulfill
$$
\<A',A'\>=0,\ \<B',B'\>=-c-1,\ \<A',B'\>=0.
$$
Hence $A$ is a constant curve in $\S_1^2$, and consequently,  one gets the parametrization \eqref{dmn:t7_1b}.\\
{\bf Case 2.c)} $c=-1$.
From \eqref{dmn:tt7_5} one obtains that $x\in (1,\ +\infty)$. Finally, $\phi(x)=\pm\ln(x)$ and $\b=e^{\pm\phi(x)}$.
Hence, the curves $A$ and $B$ verify
$$
\<A',A'\>=1,\ \<B',B'\>=1,\ \<A',B'\>=\pm 1.
$$
Using the same technique, the parametrization \eqref{dmn:t7_1c} is obtained.

{\bf Conversely}, it is not difficult to prove that the parametrizations \eqref{dmn:t7_1} indeed give a flat surface in
$\H^2\times\R$ with $T$ as a principal direction.
\end{proof}

In the end of this paper, we would like to give some results concerning
the constancy of the mean curvature for surfaces with canonical principal directions.
\begin{proposition}\rm
Let $M$ be a surface in $\H^2\times\R$, with $\theta\neq0, \frac\pi 2$ and $T$ as principal direction. If $M$ has constant principal
curvatures, then $\theta$ is constant and $M$ is given by Example \ref{dmn:parabola} with constant $\theta$.
\end{proposition}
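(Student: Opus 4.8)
The plan is to set everything up in the canonical coordinates $(x,y)$ of Proposition~\ref{dmn:prop3}, so that $g=dx^2+\beta^2(x,y)\,dy^2$, the shape operator is the diagonal matrix~\eqref{dmn:p4_2}, $\theta=\theta(x)$, and $\beta$ obeys the PDE~\eqref{dmn:p4_3}. In this frame $\partial_x$ and $\partial_y$ are principal directions with principal curvatures $\lambda_1=\theta_x$ (the one along $T$) and $\lambda_2=\tan\theta\,\beta_x/\beta$. Assuming both are constant, write $\theta_x=a$ and $\lambda_2=c$. First I would rule out $c=0$: then $\beta_x\equiv 0$ (since $\tan\theta\neq 0$), so $\beta_{xx}\equiv 0$, and~\eqref{dmn:p4_3} collapses to $\beta\cos^2\theta=0$, forcing $\theta=\frac{\pi}{2}$, a contradiction. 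Hence $c\neq 0$ and $\beta_x/\beta=c\cot\theta$.

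Next I would prove $a=0$. Setting $u=\beta_x/\beta=(\ln\beta)_x$ and using $\beta_{xx}/\beta=u_x+u^2$, dividing~\eqref{dmn:p4_3} by $\beta$ turns it into the Riccati-type identity $u_x+u^2+\tan\theta\,\theta_x\,u-\cos^2\theta=0$. Substituting $u=c\cot\theta$ and $\theta_x=a$, multiplying by $\sin^2\theta$ and putting $S=\sin^2\theta$ reduces this to the algebraic identity
$$S^2+(ac-c^2-1)\,S+(c^2-ac)=0,$$
required to hold for all $x$. If $a\neq 0$ then $\theta$ is strictly monotone, so $S$ sweeps an open interval, and a monic degree-two polynomial cannot vanish on an interval. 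Hence $a=0$, i.e. $\theta$ is constant (and then $\lambda_1=0$ automatically).

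With $\theta$ constant we have $\phi(x)=x\cos\theta$ up to a constant, and $u=c\cot\theta$ is constant, so $\beta=\rho(y)\,e^{c\cot\theta\,x}$; feeding this back into~\eqref{dmn:p4_3} (now $\beta_{xx}=\cos^2\theta\,\beta$) gives $(c\cot\theta)^2=\cos^2\theta$, hence $\beta=\rho(y)\,e^{\pm\phi(x)}$. After the admissible change of $y$-coordinate recorded in Remark~\ref{dmn:remark4} we may take $\beta=e^{\pm\phi(x)}$, which is exactly the situation $\tilde{k}(y)=\beta_x^2/\cos^2\theta-\beta^2\equiv 0$ of Case~$3$ in the proof of Theorem~\ref{dmn:thm3}. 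Therefore, up to an isometry of the ambient space, the immersion has the form found there, controlled by constant vectors $c_1,c_2,c_3$ subject to~\eqref{dmn:tt4_3_19}, i.e. a pseudo-orthonormal frame of $\R_1^3$ with $c_2$ the timelike unit vector pointing into the upper sheet. Since any two such frames are related by an isometry of $\H^2\times\R$ and the residual sign $\pm$ is absorbed by the reflection $x\mapsto -x$ of Remark~\ref{dmn:remark4}, one may normalize to $c_1=(0,1,0)$, $c_2=(0,0,1)$, $c_3=(1,0,0)$, which is precisely Example~\ref{dmn:parabola}; having shown $\theta$ constant, $M$ is that example with constant angle function.

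The short computation forcing $\theta$ to be constant is the heart of the argument, but I expect the main obstacle to be the bookkeeping in the last paragraph: justifying carefully that the constant-vector data of Case~$3$ can always be put in the normalized form of Example~\ref{dmn:parabola} by an ambient isometry, and that the leftover sign is killed by a coordinate reflection, requires some care with the isometry group $O^{+}(2,1)$ of the hyperboloid model and with how $x\mapsto -x$ acts on $\phi$ and $\chi$.
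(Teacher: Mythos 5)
Your proof is correct and follows essentially the same route as the paper: work in the canonical coordinates of Proposition~\ref{dmn:prop3}, note that constant principal curvatures force $\theta_x$ and $\tan\theta\,\beta_x/\beta$ to be constants $a$ and $c$, and substitute into the compatibility PDE \eqref{dmn:p4_3} to conclude $a=0$, after which the surface falls into Case 3 of Theorem~\ref{dmn:thm3}. Your quadratic identity factors as $(\sin^2\theta-1)\bigl(\sin^2\theta-c(c-a)\bigr)=0$, which is precisely the paper's relation $\cos^2\theta\bigl((\kappa_2-\kappa_1)\kappa_2-\sin^2\theta\bigr)=0$; the only (cosmetic) difference is that your single polynomial identity absorbs the subcases $\kappa_2=0$ and $\kappa_1=\kappa_2$ that the paper handles separately.
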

\begin{proof}
We will look for such surfaces by using canonical coordinates.
As a consequence of \eqref{dmn:p4_2} we have
$$
\theta_x=\kappa_1 \ {\rm and\ }
\tan\theta~\frac{\beta_x}{\beta}=\kappa_2,\ {\rm where}\
\kappa_1,\kappa_2\in\R. \vspace{-2mm}
$$
If $\kappa_1=0$ then $M$ is a constant angle surface and we retrieve the Case 3 of Theorem 3.
The result follows also from \cite[Remark 3.5]{dmn:DM09}.\\
If $\kappa_1\neq0$ then $\theta=\kappa_1x+c$, hence non constant, where $c\in\R$. Therefore,
$\beta=\psi(y)[\sin\theta]^\frac{\kappa_2}{\kappa_1}$, where
$\psi$ is a smooth function on $M$. Moreover, $\theta$ and $\beta$
verify also \eqref{dmn:p4_3} which can be rewritten, if $\kappa_2\neq 0$ and $\kappa_1\neq\kappa_2$, as
$$
\cos^2\theta[\sin\theta]^{\frac{\kappa_2}{\kappa_1}-2}\Big((\kappa_2-\kappa_1)\kappa_2-\sin^2\theta\Big)=0$$
yielding a contradiction with $\theta$ non constant ($\kappa_1\neq0$).\\
If $\kappa_2=0$ or $\kappa_1=\kappa_2$ then $\b_x=\kappa_2\psi(y)\cos\theta$ which implies $\b\cos^2\theta=0$, i.e.
$\theta=\frac\pi2$.

\end{proof}

Yet, there exist non-minimal CMC surfaces in $\H^2\times \R$ having
$T$ as principal direction.
\begin{example}\rm
We give an example of such a surface.

Looking at the expression \eqref{dmn:p4_2}
of the Weingarten operator $A$, we write $\partial_x\big(\beta\sin\theta\big)=2H\beta\cos\theta$,
where $H\neq0$ is the mean curvature of the surface $M$.
We try to find $\theta$ and $\beta$ such that $\beta\cos\theta$ is constant.
Integrating the previous equation and taking into account that $\theta$ depends only on $x$ and
$\beta\cos\theta$ is a constant it follows, after a translation in parameter $x$, that
$\theta=\arctan(2Hx)$. Moreover, after a homothetic transformation of the $y$-parameter,
$\beta=\sqrt{1+4H^2x^2}$. As the compatibility
equation \eqref{dmn:p4_3} must be identically satisfied by $\theta$ and $\beta$, we get $H^2=\frac14$. Hence,
$\theta=\arctan(x)$ and $\b=\sqrt{x^2+1}$. Using now the classification Theorem~\ref{dmn:thm4},
$$
F(x,y)=\left(A(y)\sinh\phi(x)+B(y)\cosh\phi(x), \chi(x)\right)
$$
which can be explicitly determined. Namely, $\phi(x)={\rm arcsinh}(x)$ and
the last component is $\chi(x)=\sqrt{1+x^2}-1$.
Combining the expression of $\b=\cosh\phi(x)$ with \eqref{dmn:tt6_1} we get that the curves $A$ and $B$ satisfy
$
\<A',A'\>=0,\ \<A',B'\>=0,\ \<B',B'\>=1
$.
Let us choose $A(y)=(1,0,0)$ and $B=(0,\cosh y, \sinh y)$.
We conclude that the embedding equations of $M$ are
$$
F(x,y)=\left(x,\sqrt{1+x^2}~\sinh y,\sqrt{1+x^2}~\cosh y,\sqrt{1+x^2}-1~\right).
$$
The converse can be proved by straightforward computations.
\end{example}

\smallskip
{\small {\bf Acknowledgement.}
{The authors would like to thank Ruy Tojeiro
for valuable discussions leading to a simplification of Theorem~\ref{dmn:thm5}.}

{\small

}



\begin{thebibliography}{90}
\bibitem{dmn:AEG07} J.A. Aledo, J.M. Espinar, J.A. G\'alvez,
            \emph{Complete surfaces of constant curvature in ${\mathbb{S}}^2\times{\mathbb{R}}$ and ${\mathbb{H}}^2\times{\mathbb{R}}$},
            Calculus of Variations and Partial Differential Equations {\bf 29} (3) (2007), 347--363.

\bibitem{dmn:Dan} B. Daniel,
        \emph{Isometric immersions into ${\mathbb{S}}^n\times {\mathbb{R}}$ and ${\mathbb{H}}^n\times{\mathbb{R}}$ and applications to minimal surfaces},
        Trans. Amer. Math. Soc. {\bf 361} (12) (2009), 6255--6282.

\bibitem{dmn:DFV} F. Dillen, J. Fastenakels, J. Van der Veken,
        \emph{Rotation hypersurfaces in ${\mathbb{S}}^{n} \times{\mathbb{R}}$ and ${\mathbb{H}}^{n} \times {\mathbb{R}}$},
        Note di Matematica, {\bf 29} (1) (2008), 41--54.

\bibitem{dmn:DFVV} F. Dillen, J. Fastenakels, J. Van der Veken,
            \emph{Surfaces in ${\mathbb{S}}^{2} \times {\mathbb{R}}$ with a canonical principal direction},
            Ann. Glob. Anal. Geom. {\bf 35} (4) (2009), 381--396.

\bibitem{dmn:DFVV07} F. Dillen, J. Fastenakels, J. Van der Veken, L. Vrancken,
            \emph{Constant Angle Surfaces in ${\mathbb{S}}^{2} \times {\mathbb{R}} $},
            Monaths. Math. {\bf 152} (2) (2007), 89--96.

\bibitem{dmn:DM07} F. Dillen, M.I. Munteanu,
            \emph{Surfaces in $\H^+ \times \R$},
            Proceedings of the conference Pure and Applied Differential Geometry, PADGE,
             Brussels 2007, ISBN 9783-8322-6759-9, 185-193.

\bibitem{dmn:DM09} F. Dillen, M.I. Munteanu,
                  \emph{Constant Angle Surfaces in ${\mathbb{H}}^{2} \times {\mathbb{R}}$},
                  Bull. Braz. Math. Soc. {\bf 40} (1) (2009), 85--97.

\bibitem{dmn:FMV08} J. Fastenakels, M.I. Munteanu, J. Van der Veken,
            \emph{Constant angle surfaces in the Heisenberg group},
             arXiv:0907.5528v1 [math.DG] 2009.

\bibitem{dmn:FM07} I. Fern\'andez, P. Mira,
            \emph{A characterization of constant mean curvature surfaces in homogeneous $3$-manifolds},
            Diff. Geom. Its Appl. {\bf 25} (3) (2007), 281--289.

\bibitem{dmn:Hau06} L. Hauswirth,
            \emph{Minimal surfaces of Riemann type in three-dimensional product manifolds},
            Pacific J. Math. {\bf 224} (1) (2006), 91--118.

\bibitem{dmn:LM10} R. L\'opez, M.I. Munteanu,
        \emph{Constant angle surfaces in Minkowski space},
        to appear in Bull. Belgian Math. Soc. -- Simon Stevin.

\bibitem{dmn:MT} F. Manfio, R. Tojeiro,
        \emph{Hypersurfaces with constant sectional curvature of $\mathbb{S}^n\times\mathbb{R}$ and $\mathbb{H}^n\times\mathbb{R}$},
        to appear in Illinois J. Math., available on arxiv:0909.2261v1 [math.DG] 2009.

\bibitem{dmn:MR04} W.H. Meeks III, H. Rosenberg,
              \emph{Stable minimal surfaces in ${\mathbb{M}} \times {\mathbb{R}}$},
              J. Differential Geometry {\bf 68} (3) (2004), 515--534.

\bibitem{dmn:MR05} W.H. Meeks III, H. Rosenberg,
              \emph{The theory of minimal surfaces in ${\mathbb{M}} \times {\mathbb{R}}$},
              Comment. Math. Helvetici {\bf 80} (4) (2005), 811--858.

\bibitem{dmn:MMP06} F. Mercuri, S. Montaldo, P. Piu,
            \emph{A Weierstrass representation Formula for Minimal Surfaces in ${\mathbb{H}}_3$ and ${\mathbb{H}}^2\times{\mathbb{R}}$},
            Acta Math. Sinica (Engl. Ser.) {\bf 22} (6) (2006), 1603--1612.

\bibitem{dmn:MO07} S. Montaldo, I.I. Onnis,
            \emph{A Note on Surfaces in ${\mathbb{H}}^2\times{\mathbb{R}}$},
            Boll. U.M.I. {\bf 8} (10-B) (2007), 939--950.

\bibitem{dmn:MN09} M.I. Munteanu, A.I. Nistor,
              \emph{A new approach on constant angle surfaces in ${\mathbb{E}}^3$},
                Turk. J. Math. {\bf 33} (2) (2009), 169--178.

\bibitem{dmn:NR02} B. Nelli, H. Rosenberg,
            \emph{Minimal surfaces in ${\mathbb{H}}^2\times{\mathbb{R}}$},
            Bull. Braz. Math. Soc. {\bf 33} (2) (2002), 263--292.

\bibitem{dmn:Ros02} H. Rosenberg,
             \emph{Minimal surfaces in ${\mathbb{M}}^{2} \times {\mathbb{R}}$},
             Illinois J. Math. {\bf 46} (4) (2002), 1177--1195.

\bibitem{dmn:SaE08} R. Sa Earp,
            \emph{Parabolic and hyperbolic screw motion surfaces in ${\mathbb{H}}^2\times{\mathbb{R}}$},
            J. Australian Math. Soc., {\bf 85} (1) (2008), 113-143.

\bibitem{dmn:ST05} R. Sa Earp, E. Toubiana,
            \emph{Screw motions surfaces in ${\mathbb{H}}^2\times{\mathbb{R}}$ and ${\mathbb{S}}^2\times{\mathbb{R}}$},
            Illinois J. Math. {\bf 49} (4) (2005), 1323--1362.

\bibitem{dmn:ST09} R. Souam, E. Toubiana,
            \emph{Totally umbilic surfaces in homogeneous $3$-manifolds},
            Comment. Math. Helvetici, {\bf 84} (3) (2009), 673--704.

\bibitem{dmn:T09} R. Tojeiro,
            \emph{On a class of hypersurfaces in ${\mathbb{S}}^n\times{\mathbb{R}}$ and ${\mathbb{H}}^n\times{\mathbb{R}}$},
            arxiv:0909.2265v1 [math.DG] 2009.
\end{thebibliography}
\end{document}